\definecolor{gray}{rgb}{0.7, 0.75, 0.71}
\def\red{\textcolor{red} }
\def\blue{\textcolor{blue} }
\def\green{\textcolor{green} }
\def\v{\vert}
\def\a{\ensuremath{\mathcal A}\xspace}
\def\b{\ensuremath{\mathcal B}\xspace}
\def\d{\ensuremath{\mathcal D}\xspace}
\def\r{\ensuremath{\mathcal R}\xspace}
\def\u{\ensuremath{\mathcal U}\xspace}
\def\V{\ensuremath{\mathcal V}\xspace}
\def\gf{generating function\xspace}
\def\gl{ground level\xspace}
\newskip\Einheit \Einheit=.6cm
\newdimen\xdim \newdimen\ydim \newdimen\PfadD@cke \newdimen\Pfadd@cke
\def\PfadDicke#1{\PfadD@cke#1 \divide\PfadD@cke by2 
\Pfadd@cke\PfadD@cke \multiply\PfadD@cke by2}
\long\def\LOOP#1\REPEAT{\def\BODY{#1}\ITERATE}
\def\ITERATE{\BODY \let\next\ITERATE \else\let\next\relax\fi \next}
\let\REPEAT=\fi
\def\Punkt{\hbox{\raise-2pt\hbox to0pt{\hss\scriptsize$\bullet$\hss}}}
\def\DuennPunkt(#1,#2){\unskip
  \raise#2 \Einheit\hbox to0pt{\hskip#1 \Einheit
          \raise-1.5pt\hbox to0pt{\hss\tiny$\bullet$\hss}\hss}}
\def\NormalPunkt(#1,#2){\unskip
  \raise#2 \Einheit\hbox to0pt{\hskip#1 \Einheit
          \raise-3pt\hbox to0pt{\hss\large$\bullet$\hss}\hss}}
\def\DickPunkt(#1,#2){\unskip
  \raise#2 \Einheit\hbox to0pt{\hskip#1 \Einheit
          \raise-4pt\hbox to0pt{\hss\Large$\bullet$\hss}\hss}}
\def\Kreis(#1,#2){\unskip
  \raise#2 \Einheit\hbox to0pt{\hskip#1 \Einheit
          \raise-4pt\hbox to0pt{\hss\Large$\circ$\hss}\hss}}
\def\Diagonale(#1,#2)#3{\unskip\leavevmode
  \xcoord#1\relax \ycoord#2\relax
      \raise\ycoord \Einheit\hbox to0pt{\hskip\xcoord \Einheit
         \unitlength\Einheit
         \line(1,1){#3}\hss}}
\def\AntiDiagonale(#1,#2)#3{\unskip\leavevmode
  \xcoord#1\relax \ycoord#2\relax \advance\xcoord by -0.05\relax
      \raise\ycoord \Einheit\hbox to0pt{\hskip\xcoord \Einheit
         \unitlength\Einheit
         \line(1,-1){#3}\hss}}
\def\Pfad(#1,#2),#3\endPfad{\unskip\leavevmode
  \xcoord#1 \ycoord#2 \thicklines\ZeichnePfad#3\endPfad\thinlines}
\def\ZeichnePfad#1{\ifx#1\endPfad\let\next\relax
  \else\let\next\ZeichnePfad
    \ifnum#1=1
      \raise\ycoord \Einheit\hbox to0pt{\hskip\xcoord \Einheit
         \vrule height\Pfadd@cke width1 \Einheit depth\Pfadd@cke\hss}%
      \advance\xcoord by 1
     \else\ifnum#1=2
      \raise\ycoord \Einheit\hbox to0pt{\hskip\xcoord \Einheit
         \unitlength\Einheit
         \line(0,1){1}\hss}
      \advance\xcoord by 0
      \advance\ycoord by 1
 \else\ifnum#1=3
      \raise\ycoord \Einheit\hbox to0pt{\hskip\xcoord \Einheit
         \unitlength\Einheit
         \line(1,1){1}\hss}
      \advance\xcoord by 1
      \advance\ycoord by 1
    \else\ifnum#1=4
      \raise\ycoord \Einheit\hbox to0pt{\hskip\xcoord \Einheit
         \unitlength\Einheit
         \line(1,-1){1}\hss}
      \advance\xcoord by 1
      \advance\ycoord by -1
   \else\ifnum#1=5
      \raise\ycoord \Einheit\hbox to0pt{\hskip\xcoord \Einheit
         \unitlength\Einheit
         \line(2,1){2}\hss}
      \advance\xcoord by 2
      \advance\ycoord by 1
	  \else\ifnum#1=6
      \raise\ycoord \Einheit\hbox to0pt{\hskip\xcoord \Einheit
         \unitlength\Einheit
         \line(2,-1){2}\hss}
      \advance\xcoord by 2
      \advance\ycoord by -1
	  \else\ifnum#1=7
      \raise\ycoord \Einheit\hbox to0pt{\hskip\xcoord \Einheit
         \unitlength\Einheit
         \line(3,1){3}\hss}
      \advance\xcoord by 3
      \advance\ycoord by 1
	  \else\ifnum#1=8
      \raise\ycoord \Einheit\hbox to0pt{\hskip\xcoord \Einheit
         \unitlength\Einheit
         \line(3,-1){3}\hss}
      \advance\xcoord by 3
      \advance\ycoord by -1
    \fi\fi\fi\fi\fi\fi\fi\fi
  \fi\next}
\def\hSSchritt{\leavevmode\raise-.4pt\hbox 
to0pt{\hss.\hss}\hskip.2\Einheit
  \raise-.4pt\hbox to0pt{\hss.\hss}\hskip.2\Einheit
  \raise-.4pt\hbox to0pt{\hss.\hss}\hskip.2\Einheit
  \raise-.4pt\hbox to0pt{\hss.\hss}\hskip.2\Einheit
  \raise-.4pt\hbox to0pt{\hss.\hss}\hskip.2\Einheit}
\def\vSSchritt{\vbox{\baselineskip.2\Einheit\lineskiplimit0pt
\hbox{.}\hbox{.}\hbox{.}\hbox{.}\hbox{.}}}
\def\DSSchritt{\leavevmode\raise-.4pt\hbox to0pt{%
  \hbox to0pt{\hss.\hss}\hskip.2\Einheit
  \raise.2\Einheit\hbox to0pt{\hss.\hss}\hskip.2\Einheit
  \raise.4\Einheit\hbox to0pt{\hss.\hss}\hskip.2\Einheit
  \raise.6\Einheit\hbox to0pt{\hss.\hss}\hskip.2\Einheit
  \raise.8\Einheit\hbox to0pt{\hss.\hss}\hss}}
\def\dSSchritt{\leavevmode\raise-.4pt\hbox to0pt{%
  \hbox to0pt{\hss.\hss}\hskip.2\Einheit
  \raise-.2\Einheit\hbox to0pt{\hss.\hss}\hskip.2\Einheit
  \raise-.4\Einheit\hbox to0pt{\hss.\hss}\hskip.2\Einheit
  \raise-.6\Einheit\hbox to0pt{\hss.\hss}\hskip.2\Einheit
  \raise-.8\Einheit\hbox to0pt{\hss.\hss}\hss}}
\def\SPfad(#1,#2),#3\endSPfad{\unskip\leavevmode
  \xcoord#1 \ycoord#2 \ZeichneSPfad#3\endSPfad}
\def\ZeichneSPfad#1{\ifx#1\endSPfad\let\next\relax
  \else\let\next\ZeichneSPfad
    \ifnum#1=1
      \raise\ycoord \Einheit\hbox to0pt{\hskip\xcoord \Einheit
         \hSSchritt\hss}%
      \advance\xcoord by 1
    \else\ifnum#1=2
      \raise\ycoord \Einheit\hbox to0pt{\hskip\xcoord \Einheit
        \hbox{\hskip-2pt \vSSchritt}\hss}%
      \advance\ycoord by 1
    \else\ifnum#1=3
      \raise\ycoord \Einheit\hbox to0pt{\hskip\xcoord \Einheit
         \DSSchritt\hss}
      \advance\xcoord by 1
      \advance\ycoord by 1
    \else\ifnum#1=4
      \raise\ycoord \Einheit\hbox to0pt{\hskip\xcoord \Einheit
         \dSSchritt\hss}
      \advance\xcoord by 1
      \advance\ycoord by -1
    \fi\fi\fi\fi
  \fi\next}
\def\Koordinatenachsen(#1,#2){\unskip
 \hbox to0pt{\hskip-.5pt\vrule height#2 \Einheit width.5pt depth1 
\Einheit}%
 \hbox to0pt{\hskip-1 \Einheit \xcoord#1 \advance\xcoord by1
    \vrule height0.25pt width\xcoord \Einheit depth0.25pt\hss}}
\def\Koordinatenachsen(#1,#2)(#3,#4){\unskip
 \hbox to0pt{\hskip-.5pt \ycoord-#4 \advance\ycoord by1
    \vrule height#2 \Einheit width.5pt depth\ycoord \Einheit}%
 \hbox to0pt{\hskip-1 \Einheit \hskip#3\Einheit 
    \xcoord#1 \advance\xcoord by1 \advance\xcoord by-#3 
    \vrule height0.25pt width\xcoord \Einheit depth0.25pt\hss}}
\def\Gitter(#1,#2){\unskip \xcoord0 \ycoord0 \leavevmode
  \LOOP\ifnum\ycoord<#2
    \loop\ifnum\xcoord<#1
      \raise\ycoord \Einheit\hbox to0pt{\hskip\xcoord 
\Einheit\Punkt\hss}%
      \advance\xcoord by1
    \repeat
    \xcoord0
    \advance\ycoord by1
  \REPEAT}
\def\Gitter(#1,#2)(#3,#4){\unskip \xcoord#3 \ycoord#4 \leavevmode
  \LOOP\ifnum\ycoord<#2
    \loop\ifnum\xcoord<#1
      \raise\ycoord \Einheit\hbox to0pt{\hskip\xcoord 
\Einheit\Punkt\hss}%
      \advance\xcoord by1
    \repeat
    \xcoord#3
    \advance\ycoord by1
  \REPEAT}
\def\Label#1#2(#3,#4){\unskip \xdim#3 \Einheit \ydim#4 \Einheit
  \def\lo{\advance\xdim by-.5 \Einheit \advance\ydim by.5 \Einheit}%
  \def\llo{\advance\xdim by-.25cm \advance\ydim by.5 \Einheit}%
  \def\loo{\advance\xdim by-.5 \Einheit \advance\ydim by.25cm}%
  \def\o{\advance\ydim by.25cm}%
  \def\ro{\advance\xdim by.5 \Einheit \advance\ydim by.5 \Einheit}%
  \def\rro{\advance\xdim by.25cm \advance\ydim by.5 \Einheit}%
  \def\roo{\advance\xdim by.5 \Einheit \advance\ydim by.25cm}%
  \def\l{\advance\xdim by-.30cm}%
  \def\r{\advance\xdim by.30cm}%
  \def\lu{\advance\xdim by-.5 \Einheit \advance\ydim by-.6 \Einheit}%
  \def\llu{\advance\xdim by-.25cm \advance\ydim by-.6 \Einheit}%
  \def\luu{\advance\xdim by-.5 \Einheit \advance\ydim by-.30cm}%
  \def\u{\advance\ydim by-.30cm}%
  \def\ru{\advance\xdim by.5 \Einheit \advance\ydim by-.6 \Einheit}%
  \def\rru{\advance\xdim by.25cm \advance\ydim by-.6 \Einheit}%
  \def\ruu{\advance\xdim by.5 \Einheit \advance\ydim by-.30cm}%
  #1\raise\ydim\hbox to0pt{\hskip\xdim
     \vbox to0pt{\vss\hbox to0pt{\hss$#2$\hss}\vss}\hss}%
}
\begin{document}
\newtheorem{theorem}{Theorem}
\newtheorem{defn}[theorem]{Definition}
\newtheorem{lemma}[theorem]{Lemma}
\newtheorem{prop}[theorem]{Proposition}
\newtheorem{cor}[theorem]{Corollary}
\begin{center}

{\Large
On Ascent, Repetition and Descent Sequences     \\ 
}

\vspace{5mm}
David Callan  \\
\today
\end{center}

\begin{abstract}
Ascent sequences have received a lot of attention in recent years in 
connection with (2 + 2)-free posets and other combinatorial objects. 
Here, we  first show bijectively that analogous repetition 
sequences  are counted by the Bell numbers, and 021-avoiding repetition sequences by the Catalan numbers.  
Then we adapt a bijection of Chen et al and use it along with the ``symbolic'' 
method of Flajolet to find the 4-variable generating function for 021-avoiding ascent sequences
by length, number of 0's, number of isolated 0's, and number of runs of 2 or more 0's. 
We deduce that 021-avoiding ascent sequences that have no consecutive 0's (resp. no isolated 0's) both 
satisfy a Catalan-like recurrence, differing only in initial conditions, and give a bijective proof for 
the case of no consecutive 0's. Lastly, we show that 021-avoiding descent sequences are equinumerous with 
same-size $UUD\hspace{2 pt}U$-avoiding Dyck paths.

\end{abstract}

\section{Introduction}\label{intro}

An \emph{ascent sequence} is a sequence $a_1a_2\dots a_n$ of nonnegative 
integers with $a_1=0$ and $a_i \le 1\:+ $ number of ascents in $a_1\dots  
a_{i-1}$ for $i\ge 2$, that is, 
$a_i \le 1 +\,\#\,\{j\in[\,1,i-2\,]:\ a_j<a_{j+1}\}$.
Analogously, \emph{repetition sequences}  and \emph{descent sequences} are defined by replacing ``$a_j<a_{j+1}$'' with ``$a_j=a_{j+1}$'' and ``$a_j>a_{j+1}$,'' respectively, in the definition of ascent sequence.
Ascent, repetition, and descent sequences are counted, respectively, by the Fishburn numbers 
\htmladdnormallink{A022493}{http://oeis.org/A022493} in 
the OEIS \cite{oeis}, the Bell numbers 
\htmladdnormallink{A000110}{http://oeis.org/A000110}, and \htmladdnormallink{A225588}{http://oeis.org/A225588}. 

Ascent sequences have received attention in recent years in connection 
with (2 + 2)-free posets and other combinatorial objects, e.g., \cite{dukes2019}. 
For avoidance of a pattern up to length 4 in ascent sequences, see \cite{dun2011}.
We have the following useful little lemma \cite{dun2011}.
\begin{lemma}\label{021crit}
For an ascent, repetition, or descent sequence, since the first entry is 0, 
avoidance of the pattern 021 (aka pattern 132) is equivalent to ``nonzero entries are weakly increasing.''
\end{lemma}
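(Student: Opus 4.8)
The plan is to prove the two implications directly, observing first that the defining inequality of ascent (resp.\ repetition, descent) sequences is never needed: the only property used is the common constraint $a_1 = 0$, so a single argument covers all three cases. Recall that $a_1 a_2\cdots a_n$ contains the pattern $021$ (equivalently $132$) exactly when there are positions $i<j<k$ with the leftmost value smallest, the middle value largest, and the rightmost value strictly in between, i.e.\ $a_i < a_k < a_j$; and ``nonzero entries are weakly increasing'' means that $a_j \le a_k$ whenever $j<k$ and $a_j, a_k$ are both nonzero.

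For the direction ``weakly increasing nonzero entries $\Rightarrow$ $021$-avoiding'' I would argue contrapositively from a single forbidden occurrence: if $a_i<a_k<a_j$ with $i<j<k$, then $a_j>a_k>a_i\ge 0$ forces both $a_j$ and $a_k$ to be positive, so $a_j, a_k$ are nonzero entries sitting in positions $j<k$ with $a_j>a_k$, contradicting weak monotonicity of the nonzero subsequence. For the converse, suppose the nonzero subsequence is not weakly increasing; choose $j<k$ with $a_j, a_k$ nonzero and $a_j>a_k$. This is exactly where $a_1=0$ enters: since $a_j\ne 0$ we have $j\ge 2$, so $1<j<k$ are three distinct positions, and $a_1 = 0 < a_k < a_j$ (the first inequality because $a_k\ge 1$), exhibiting a $021$ pattern.

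I do not expect a genuine obstacle — the statement is essentially a repackaging of the pattern $021$ together with the boundary condition — so the only things to be careful about are the orientation of $021/132$ (its maximum occupies the \emph{middle} slot, not the last) and the little check that $j\ne 1$, which holds because $a_1 = 0$ whereas $a_j\ne 0$ and is what keeps the chosen positions distinct.
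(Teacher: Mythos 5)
Your proof is correct and complete. Note that the paper itself offers no proof of this lemma---it is stated as a known fact with a citation to Duncan and Steingr\'{i}msson---so there is no argument in the text to compare against; your two-implication argument is the standard one, and you correctly identify the only two points of care: that in an occurrence of $021$ the largest value sits in the \emph{middle} position (so a violation $a_j>a_k$ of weak monotonicity among nonzero entries, prefixed by $a_1=0$, yields $a_1<a_k<a_j$), and that $j\neq 1$ because $a_j\neq 0=a_1$. Your observation that only the boundary condition $a_1=0$ is used, and none of the ascent/repetition/descent inequalities, is also accurate and explains why the lemma covers all three families at once.
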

We let $\a_n$ denote the set of ascent sequences of length $n$ and $\a_n(021)$ those that avoid 021. 
Analogously, $\mathcal{R}_n$ and $\mathcal{R}_n(021)$ refer to repetition sequences, and $\d_n$ and $\d_n(021)$ to descent sequences. 
Thus, $\mathcal{R}_1=\{0\},\ \mathcal{R}_2=\{00,01\},\ \mathcal{R}_3=\{000,\, 001,\,002,\,010,\,011\}$ and 012 is 
not included in $\mathcal{R}_3$ because the last entry, 2, is too large.

In Section \ref{rsp}, we give a bijective proof that $\v \mathcal{R}_n \v =B_n$, the Bell number.
In Section \ref{021rs}, we show bijectively that $\v \mathcal{R}_n(021) \v =C_n$, the Catalan number.
In Section \ref{bij}, we give a bijection, based on a result in \cite{021chen}, from 021-avoiding ascent 
sequences to Dyck paths, and use it in Section \ref{gf021} to find the 
4-variable generating function $F(x,y,z,w)$ for 021-avoiding ascent sequences with 
$x,y,z,w$ marking, respectively, length, \# 0's, \# isolated 0's, \# runs of 2 or more 0's.
In the two sections after that,  we count 021-avoiding ascent sequences that have no consecutive 0's 
(resp. no isolated 0's) and show that, curiously, the counting sequences both satisfy a Catalan-like recurrence.
In the last section, we count 021-avoiding descent sequences and 
show that they are equinumerous with same-size $UUD\hspace{2 pt}U$-avoiding Dyck paths.

\section{From repetition sequences to partitions} \label{rsp}
We will recursively define a bijection $\phi$ from $\mathcal{R}_n$, the repetition sequences of length $n$, to set partitions of $[n]$, counted by the Bell numbers 
\htmladdnormallink{A005843}{http://oeis.org/A005843},
that sends ``number of repetitions''  to ``number of dividers.'' 
We write all set partitions in a canonical form: increasing entries within each block, and blocks arranged in increasing order of smallest entries, for example 135/29/4/678 with 4 blocks and 3 dividers (slashes) separating the blocks.

First, for $n=1$, $\phi(0)=1$ with no repetitions and no dividers. Then, for $w=a_1 a_2\dots a_n \in \mathcal{R}_n$ with $n\ge 2$, we may suppose by induction that $\phi(a_1 a_2\dots a_{n-1})$ is a set partition of $[n-1]$ in canonical form with $k$ dividers, hence $k+1$ blocks, where $k$ is the  number of repetitions in $a_1 a_2\dots a_{n-1}$. 

Now place $n$ in a block determined as follows:
\begin{itemize}
\vspace*{-3mm}
\item if $a_n=a_{n-1}$, place $n$ in a singleton block at the end, 
\item if $a_n>a_{n-1}$, place $n$ in the $a_n$-th block,
\item if $a_n<a_{n-1}$, place $n$ in the $(1+a_n)$-th block.
\end{itemize}
For example, given that $\phi(002)=1/23$ (by induction), $\phi$ sends $0020,\,0021,\,0022$ respectively to $14/23,\,1/234,\,1/23/4$. It is fairly easy to see that this procedure will work to produce a set partition with the claimed number of dividers.
It is also easy to turn $\phi$ into an explicit bijection by starting with the appropriate number of empty blocks and then placing $n,n-1,\dots,2$ in turn into their blocks and, lastly, placing 1 in the first block.

\section{021-Avoiding repetition sequences} \label{021rs}
Recall that a repetition sequence $a$ avoids 021 if and only if the 
nonzero entries of $a$ are weakly increasing left to right. 
Thus 00111020225 is a 021-avoiding repetition sequence. So
$\r_3(021)=\r_3$ and the only entry of $\r_4$ not in $\r_4(021)$ is 0021.
To show that $\v \r_n(021) \v =C_n$, we will define recursively a bijection 
$\psi$ from $\r_n(021)$ to Dyck paths of size $n$ (where size means semilength = number of up steps) 
that sends  repetitions to valleys (a valley is an occurrence of $DU$, $D$ a down step, $U$ an upstep). 
First, $\psi(0)=UD$ with no repetitions and no valleys. 

Now suppose for given $n$, $\psi(a)$ has been defined for $a \in \r_n(021)$ (induction hypothesis). 
Each element of $\r_{n+1}(021)$ is formed by 
appending a suitable entry $a_{n+1}$ to $a=(a_i)_{i=1}^n \in \r_n(021)$. We will 
show how to define $\psi$ in each case
by inserting $UD$ appropriately into $\psi(a)$. By way of illustration, let $n=9$ and
$a=000223303$ and $\psi(a)=P$ as in Figure 1. 

\Einheit=0.7cm
\[
\hspace*{5mm}
\Label\l{ \textrm{{\footnotesize 0}}}(-9.6,0.6)
\Label\l{ \textrm{{\footnotesize 1}}}(-8.6,0.6)
\Label\l{ \textrm{{\footnotesize 2}}}(-7.6,0.6)
\Label\l{ \textrm{{\footnotesize 3}}}(-6.6,0.6)
\Label\u{ \textrm{{\footnotesize 4}}}(-6,1)
\Label\u{ \textrm{{\footnotesize 5}}}(-5,1)
\Label\u{ \textrm{{\footnotesize 6}}}(-4,1)
\Label\u{ \textrm{{\footnotesize 7}}}(-3,1)
\Label\u{ \textrm{{\footnotesize 8}}}(-2,1)
\Label\u{ \textrm{{\footnotesize 9}}}(-1,1)
\Label\u{ \textrm{{\footnotesize 10}}}(0,1)
\Label\u{ \textrm{{\footnotesize 11}}}(1,1)
\Label\u{ \textrm{{\footnotesize 12}}}(2,1)
\Label\u{ \textrm{{\footnotesize 13}}}(3,1)
\Label\u{ \textrm{{\footnotesize 14}}}(4,1)
\Label\u{ \textrm{{\footnotesize 15}}}(5,1)
\Label\u{ \textrm{{\footnotesize 16}}}(6,1)
\Label\u{ \textrm{{\footnotesize 17}}}(7,1)
\Label\u{ \textrm{{\footnotesize 18}}}(8,1)
\SPfad(-10,1),111111111111111111\endSPfad
\Pfad(-10,1),3433\endPfad
\green{\Pfad(-6,3),433\endPfad}
\Pfad(-3,4),33444344434\endPfad
\DuennPunkt(-10,1)
\DuennPunkt(-9,1)
\DuennPunkt(-8,1)
\DuennPunkt(-7,1)
\DuennPunkt(-6,1)
\DuennPunkt(-5,1)
\DuennPunkt(-4,1)
\DuennPunkt(-3,1)
\DuennPunkt(-2,1)
\DuennPunkt(-1,1) 
\DuennPunkt(0,1) 
\DuennPunkt(1,1)
\DuennPunkt(2,1)
\DuennPunkt(3,1)
\DuennPunkt(4,1)
\DuennPunkt(5,1)
\DuennPunkt(6,1)
\DuennPunkt(7,1)
\DuennPunkt(8,1)
\Label\u{\swarrow}(0.5,6.0)
\Label\o{ \textrm{\small  repetition vertex}}(2.9,5.7)
\Label\u{\searrow}(-5,3.7)
\Label\o{ \textrm{\small  last $DUU$}}(-5.8,3.8)
\DuennPunkt(-9,2)
\DuennPunkt(-7,2)
\DuennPunkt(-6,3)
 \green{\DuennPunkt(-5,2)
\DuennPunkt(-4,3)}
\DuennPunkt(-3,4)
\DuennPunkt(-2,5)
\blue{\NormalPunkt(-1,6) }
\red{\NormalPunkt(0,5)} 
\DuennPunkt(1,4)
\DuennPunkt(2,3)
\blue{\NormalPunkt(3,4)}
\DuennPunkt(4,3)
\DuennPunkt(5,2)
\DuennPunkt(6,1)
\blue{\NormalPunkt(7,2)}
\DuennPunkt(8,1)
\Label\o{ \textrm{\small  The Dyck path $P$ with last $DUU$ in green, repetition vertex}}(0,-1)
\Label\o{ \textrm{\small  in red and nonrep vertices in blue}}(0,-1.8)
\Label\o{ \textrm{Figure 1}}(0,-3.2)
\]
  
\vspace*{3mm}

The valid values for $a_{n+1}$ are $a_n$, here 3, the \emph{repetition} 
value because it increments  by one the number of repetitions in the sequence, and (since $a$ 
has 4 repetitions) 0,4,5, the \emph{nonrep} values.
Their counterparts in $\psi(a)$ are defined as follows. The \emph{key peak} in a nonempty 
Dyck path $P$ is the first peak after the last $DUU$ in $P$, and the first peak 
in case $P$ has no $DUU$. 
The repetition vertex is the vertex immediately after the key peak and
the nonrep vertices are the key peak vertex and all later peak vertices.
Thus, in Figure 1, the key vertex is at location 9, the repetition vertex is at location 10 
and the nonrep vertices are at locations 9,13,17. 
The ``extreme'' cases of a pyramid path and a sawtooth path, both of which avoid $DUU$, are illustrated in Figure 2.

\Einheit=0.7cm
\[
\SPfad(-8,0),111111\endSPfad
\SPfad(2,0),111111\endSPfad
\Pfad(-8,0),333444\endPfad
\Pfad(2,0),343434\endPfad
\DuennPunkt(-8,0)
\DuennPunkt(-7,1)
\DuennPunkt(-6,2)
\DuennPunkt(-5,3)
\DuennPunkt(-4,2)
\DuennPunkt(-3,1)
\DuennPunkt(-2,0)
\DuennPunkt(2,0)
\DuennPunkt(3,1)
\DuennPunkt(4,0)
\DuennPunkt(5,1)
\DuennPunkt(6,0)
\DuennPunkt(7,1)
\DuennPunkt(8,0)
\Label\u{\swarrow}(-3.3,3)
\Label\o{ \textrm{\small  repetition vertex}}(-2,2.8)
\blue{\NormalPunkt(-5,3) }
\red{\NormalPunkt(-4,2)} 
\blue{\NormalPunkt(3,1)}
\blue{\NormalPunkt(5,1)}
\blue{\NormalPunkt(7,1)}
\red{\NormalPunkt(4,0)} 
\Label\o{ \textrm{\small  pyramid path}}(-5,-1)
\Label\o{ \textrm{\small  sawtooth path}}(5,-1)
\Label\o{ \textrm{Figure 2}}(0,-2.5)
\]
  
\vspace*{3mm}

By induction (see below) the number 
of nonrep values for $a_{n+1}$ is the same as the number of nonrep 
vertices in $\psi(a)$. The definition of $\psi$ on $\r_{n+1}(021)$ is now to simply 
insert $UD$ at the 
corresponding nonrep vertex or at the repetition vertex as appropriate. For 
example, if $a_{n+1}=5$, the third nonrep value, insert $UD$ at the peak 
at location 17, the third nonrep vertex in Figure 1, and if  $a_{n+1}=3$, insert $UD$ 
at the red vertex.

Setting $a_{n+1}$ to the repetition value increments by 1 both the number of 
repetitions and the number of valid nonrep values. Setting $a_{n+1}$ to 
the $i$-th nonrep value preserves the number of repetitions and, due to the
weakly increasing requirement on 
nonzero entries, reduces the number of valid nonrep values by $i-1$. 
Correspondingly, inserting $UD$ at the repetition vertex preserves 
the last $DUU$  and increments by 1 both the number of valleys and the number of 
nonrep vertices, while inserting $UD$ at the $i$-th nonrep vertex produces a new last $DUU$, 
kills $i-1$ of the nonrep vertices and preserves the number of valleys. 
These observations are the basis for the induction claims above. 

It is not hard to see that an all-0 sequence goes to a sawtooth path and an alternating $010\dots$ 
sequence goes to a pyramid path.

As for reversing the map, if the last two entries of $a$ are equal, the 
insertion of $UD$ ensures that the last $DUU$ (which is unchanged) starts an ascent 
that is immediately followed by a short descent (i.e., of length 1). Otherwise, the 
last $DUU$ starts an ascent that is immediately followed by a long descent, 
distinguishing the two cases, and the inverse procedure is clear.  

\section{A bijection from $\mathbf{\a(021)}$ to Dyck paths} \label{bij}
Here, based on a decomposition of $\a_{n}(021)$ due to Chen et al \cite{021chen}, we describe a bijection $\tau$ that sends $\a_n(021)$ to the Dyck paths of semilength $n$. Actually, we give two descriptions, a recursive one and an algorithmic on: recursive is more concise but algorithmic is more illuminating, showing how the image Dyck path is built up by successive insertions of a $U$ and a $D$, always at \gl, according to the successive entries of the 021-avoiding ascent sequence. The algorithmic description will be useful in the next Section.

Following \cite{021chen}, for $a=(a_i)_{i=1}^n \in \a_n$, say $i$ is a \emph{tight} index if
$a_i = 1 +\,\#\,\{j\in[\,1,i-2\,]:\ a_j<a_{j+1}\}$ so that $a_i$ has the maximum value allowed 
by  the defining restriction of an ascent sequence. We  have an almost obvious lemma.
\begin{lemma}
For $a \in \a_n(021)$, if $i$ is a tight index, then $a_i$ is an ascent top.
\end{lemma}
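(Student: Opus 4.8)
The plan is to deduce the lemma from a self-contained counting inequality that I would prove by induction on length, then read off the statement by a one-line contradiction.

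\emph{Auxiliary Claim.} If $b_1b_2\cdots b_\ell\in\a_\ell(021)$, then the number of ascents of $b_1\cdots b_\ell$ (that is, $\#\{j\in[1,\ell-1]:b_j<b_{j+1}\}$) is at least $b_\ell$.

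I would prove this by induction on $\ell$, the case $\ell=1$ (where $b_1=0$) being trivial. For the inductive step, let $\ell\ge 2$; we may assume $b_\ell\ge 1$, since otherwise the claim is immediate. If $b_{\ell-1}<b_\ell$, then position $\ell-1$ is an ascent, so the number of ascents of $b_1\cdots b_\ell$ equals $1+\#\{j\in[1,\ell-2]:b_j<b_{j+1}\}$, and the defining inequality of an ascent sequence, $b_\ell\le 1+\#\{j\in[1,\ell-2]:b_j<b_{j+1}\}$, makes this $\ge b_\ell$. If instead $b_{\ell-1}\ge b_\ell\ge 1$, then $b_{\ell-1}$ and $b_\ell$ are both nonzero, so Lemma \ref{021crit} (nonzero entries weakly increasing) forces $b_{\ell-1}\le b_\ell$, hence $b_{\ell-1}=b_\ell$; then $b_1\cdots b_{\ell-1}\in\a_{\ell-1}(021)$ (a prefix of a $021$-avoiding ascent sequence is again one) has the same number of ascents as $b_1\cdots b_\ell$, and by induction that number is at least $b_{\ell-1}=b_\ell$.

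Granting the Claim, the lemma follows at once. Let $i$ be a tight index, so $a_i=1+\#\{j\in[1,i-2]:a_j<a_{j+1}\}$; the count on the right is precisely the number of ascents of $a_1\cdots a_{i-1}$, and in particular $a_i\ge 1$. Suppose, for contradiction, that $a_i$ is not an ascent top, i.e.\ $a_{i-1}\ge a_i$. Then $a_{i-1}\ge a_i\ge 1$, and applying the Claim to $a_1\cdots a_{i-1}\in\a_{i-1}(021)$ shows that the number of ascents of $a_1\cdots a_{i-1}$ is at least $a_{i-1}\ge a_i$. Hence $a_i=1+(\text{that number})\ge 1+a_i$, which is absurd. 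Therefore $a_{i-1}<a_i$, i.e.\ $a_i$ is an ascent top.

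The only real decision is isolating the right auxiliary statement: trying to bound the number of ascents of $a_1\cdots a_{i-1}$ directly in terms of its largest entry is hopeless (an alternating block $0\,1\,0\,1\cdots$ has many ascents but tiny entries), so one must feed the ascent-sequence constraint back in repeatedly, which is exactly what the induction above packages. Once the Claim is available, I expect no further difficulty.
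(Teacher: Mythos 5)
Your proof is correct. The underlying mechanism is the same as the paper's --- both arguments turn on the observation that if $a_{i-1}\ge a_i\ge 1$ then Lemma \ref{021crit} forces $a_{i-1}=a_i$, after which the ascent-sequence bound is violated one position earlier --- but you package it differently. The paper argues directly: it locates the terminal run $a_r<a_{r+1}=\cdots=a_i$ of entries equal to $a_i$, notes that tightness of $i$ transfers to position $r+1$ (the run contributes no ascents), and gets $a_{r+1}=2+\#\,\text{ascents in }a_1\dots a_r$, one more than allowed. You instead isolate the invariant ``the number of ascents of a $021$-avoiding ascent sequence is at least its last entry'' and prove it by an induction that walks back through that same run one step at a time; the lemma then falls out of a one-line contradiction. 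Your auxiliary claim is a clean, reusable statement and your induction is airtight (the prefix of a $021$-avoiding ascent sequence is again one, and the case split $b_{\ell-1}<b_\ell$ versus $b_{\ell-1}=b_\ell$ is exhaustive once $b_\ell\ge1$), so this is a perfectly valid alternative; the paper's version is shorter because it jumps to the start of the run in a single step rather than inducting. One cosmetic remark: you should note that a tight index satisfies $i\ge2$ (since $a_1=0\ne1$), so that $a_{i-1}$ exists; this is implicit in your observation that $a_i\ge1$.
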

\begin{proof}
Suppose $i$ is tight. 
Then $a_i \ne 0$ and $a_1\dots a_i$ ends with $a_r<a_{r+1}=a_{r+2}= \cdots =a_i$ 
for some $r\le i-1$ due to the 
weakly increasing property of the nonzero entries. We wish to show $r=i-1$. If not, 
$a_{r+1}=a_i=1 +\# \textrm{ ascents in }a_1 \dots a_{r+1}\textrm{ (since $r+1 < i$) } = 2+ \# 
\textrm{ ascents in }a_1 \dots a_{r}$, and $a_{r+1}$ is too big for an ascent sequence.
\end{proof}
The \emph{key} index $k$ for $a\in \a_n(021)$ is its largest tight 
index. The index $i$ of the first 1 in $a$ is tight and so $k$ exists except for 
the all-0 sequence $0^n$, where we take $k=n$ as the key index. 
Henceforth, suppose $a\in\a_n(021)$. 
Set $M=a_k$. Then $a_{k+1}$, if present, is $M$ or 0. More generally, deleting the first $k$ entries and all $t\ge 0$ $M$s that immediately follow $a_k$, the remaining sequence $a_{k+t+1} \dots a_n$ is either empty or begins with 0 and, after each nonzero entry is 
decremented by $M-1$, is a 021-avoiding ascent sequence.  This fact is the key to recursion.

First, $\tau$ sends the empty sequence to the empty path. Now, to define $\tau$ recursively, 
suppose given $a\in \a_n(021)$ with $n\ge 1$. With $k$ the key index, if $a_{k+1}=M$, define 
\[
\tau(a)=UD\,\tau(a_1,\dots,\widehat{a_k},\dots,a_n),
\] 
where the hat denotes that entry is omitted.
Otherwise, define
\[
\tau(a)=U\,\tau(a_1,\dots,a_{k-1})\,D\,\tau(b_{k+1},\dots,b_n),
\]
where $b_{k+1},\dots,b_n$ is a 021-avoiding ascent sequence obtained from $a_{k+1},\dots,a_n$ by subtracting $M-1$ from each nonzero entry.

For the algorithmic description, we need the notion of the \emph{DD-components} of a nonempty Dyck path: split the path after each $DD$ that returns the path to \gl, see Figure 3 below.

\Einheit=0.4cm
\[
\SPfad(-12,0),111111111111111111111111\endSPfad
\Pfad(-12,0),343433434434343433344434\endPfad
\DuennPunkt(-12,0)
\DuennPunkt(-11,1)
\DuennPunkt(-10,0)
\DuennPunkt(-9,1)
\DuennPunkt(-8,0)
\DuennPunkt(-7,1)
\DuennPunkt(-6,2)
\DuennPunkt(-5,1) 
\DuennPunkt(-4,2) 
\DuennPunkt(-3,1)
\blue{\NormalPunkt(-2,0) }
\DuennPunkt(-1,1)
\DuennPunkt(-0,0)
\DuennPunkt(1,1)
\DuennPunkt(2,0)
\DuennPunkt(3,1)
\DuennPunkt(4,0)
\DuennPunkt(5,1)
\DuennPunkt(6,2)
\DuennPunkt(7,3)
\DuennPunkt(8,2)
\DuennPunkt(9,1)
\blue{\NormalPunkt(10,0)}
\DuennPunkt(11,1)
\DuennPunkt(12,0)
\Label\u{ \textrm{{\footnotesize $\uparrow$}}}(7,0)
\Label\u{ \textrm{{\footnotesize \gl}}}(7,-1)
\Label\u{ \textrm{\small  A Dyck path with 3 $DD$-components, delimited by the blue vertices}}(0,-2.4)
\Label\o{ \textrm{Figure 3}}(0,-5.8)
\]  
\vspace*{3mm}

Thus each $DD$-component has the form $(UD)^i UPD$ where $i\ge 0$ and $P$ is a nonempty Dyck path, except for the last one where $P$ may be empty; in other words, the last $DD$-component may also have the form $(UD)^i,\ i\ge 1$.

Now, to obtain $\tau(a_1 \dots a_{n-1}a_n)$ from $P=\tau(a_1\dots a_{n-1})$ for $(a_i)_{i=1}^n\in\a_n(021)$, consider 
cases. If $a_n=0$, append $UD$ to $P$. If $a_n=a_{n-1}>0$, insert $UD$ just before the last $DD$-component of $P$. 
It is convenient to call all other valid values of $a_n$ the \emph{main values} of $a_n$. 
They constitute an interval of one or more integers as in the following Table, where
$m$ denotes $\max(a_1\dots a_{n-1})$ and \# asc denotes the number of ascents in $a_1\dots  a_{n-1}$.
\[
\begin{array}{cc}
\textrm{Values of } m \textrm{ and }a_{n-1}\ & \textrm{ Main values for }a_n\\ \hline
m=0 & 1 \\
m>0\textrm{ and } a_{n-1}=0\ \ & [\,m,\,1+\, \textrm{\# asc}\,] \\
m=a_{n-1}>0 & [\,m+1,\,1+\,\textrm{\# asc}\,]
\end{array}
\]
Say $a_n$ is the $j$th main value (from smallest to largest). 
Then elevate the last $j$ $DD$-components of $P$. This means that if 
$P=QP_j \dots P_2P_1$ where $P_j,\dots,P_2,P_1$ are the last $j$
$DD$-components of $P$, then $\tau(a)=Q UP_j \dots P_2P_1D$. For example, with $n=6$ and $(a_i)_{i=1}^{n-1}=01011$, $\tau(01011)$ is shown in Figure 4, and the construction of $\tau\big((a_i)_{i=1}^n\big)$ for each $a_n$ is shown in Figure 5. 

\Einheit=0.4cm
\[
\SPfad(-5,0),1111111111\endSPfad
\Pfad(-5,0),3344343344\endPfad
\DuennPunkt(-5,0)
\DuennPunkt(-4,1)
\DuennPunkt(-3,2)
\DuennPunkt(-2,1)
\DuennPunkt(-1,0)
\DuennPunkt(0,1)
\DuennPunkt(1,0)
\DuennPunkt(2,1)
\DuennPunkt(3,2)
\DuennPunkt(4,1)
\DuennPunkt(5,0)
\Label\u{ \textrm{\small  The Dyck path $P=\tau\big((a_i)_{i=1}^{n-1}\big)=\tau(01011)$}}(0,-.8)
\Label\u{ \textrm{Figure 4}}(0,-3)
\]
  
\vspace*{10mm}

\Einheit=0.4cm
\[
\SPfad(-14,0),111111111111\endSPfad
\SPfad(2,0),111111111111\endSPfad
\Pfad(-14,0),334434334434\endPfad
\Pfad(2,0),334434343344\endPfad
\DuennPunkt(-14,0)
\DuennPunkt(-13,1)
\DuennPunkt(-12,2)
\DuennPunkt(-11,1)
\DuennPunkt(-10,0)
\DuennPunkt(-9,1)
\DuennPunkt(-8,0)
\DuennPunkt(-7,1)
\DuennPunkt(-6,2)
\DuennPunkt(-5,1)
\DuennPunkt(-4,0)
\DuennPunkt(-3,1)
\DuennPunkt(-2,0)
\DuennPunkt(2,0)
\DuennPunkt(3,1)
\DuennPunkt(4,2)
\DuennPunkt(5,1)
\DuennPunkt(6,0)
\DuennPunkt(7,1)
\DuennPunkt(8,0)
\DuennPunkt(9,1)
\DuennPunkt(10,0)
\DuennPunkt(11,1)
\DuennPunkt(12,2)
\DuennPunkt(13,1)
\DuennPunkt(14,0)
\Label\u{ \textrm{\small  $a_n=0$}}(-8,-.5)
\Label\u{ \textrm{\small  $a_n=1$}}(8,-.5)
\]

\[
\SPfad(-14,0),111111111111\endSPfad
\SPfad(2,0),111111111111\endSPfad
\Pfad(-14,0),334433433444\endPfad
\Pfad(2,0),333443433444\endPfad
\DuennPunkt(-14,0)
\DuennPunkt(-13,1)
\DuennPunkt(-12,2)
\DuennPunkt(-11,1)
\DuennPunkt(-10,0)
\DuennPunkt(-9,1)
\DuennPunkt(-8,2)
\DuennPunkt(-7,1)
\DuennPunkt(-6,2)
\DuennPunkt(-5,3)
\DuennPunkt(-4,2)
\DuennPunkt(-3,1)
\DuennPunkt(-2,0)
\DuennPunkt(2,0)
\DuennPunkt(3,1)
\DuennPunkt(4,2)
\DuennPunkt(5,3)
\DuennPunkt(6,2)
\DuennPunkt(7,1)
\DuennPunkt(8,2)
\DuennPunkt(9,1)
\DuennPunkt(10,2)
\DuennPunkt(11,3)
\DuennPunkt(12,2)
\DuennPunkt(13,1)
\DuennPunkt(14,0)
\Label\u{ \textrm{\small  $a_n=2$}}(-8,-.5)
\Label\u{ \textrm{\small  $a_n=3$}}(8,-.5)
\Label\u{ \textrm{\small  The Dyck paths $\tau\big((a_i)_{i=1}^n\big)$}}(0,-2.2)
\Label\u{ \textrm{Figure 5}}(0,-4.5)
\]

\vspace*{3mm}

This procedure works because of the following Lemma whose proof, by induction, is left to the reader.
\begin{lemma}\label{iter}
 \emph{(i)} For $a\in\a_n(021),\ \tau(a)$ ends with $UD$ if and only if $a_n=0$. \\
 \emph{(ii)} For $(a_i)_{i=1}^{n-1}\in\a_{n-1}(021)$, the number of main values for $a_n$ is equal to the number of $DD$-components in  $\tau\big((a_i)_{i=1}^{n-1}\big)$. 
 \end{lemma}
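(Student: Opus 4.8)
The plan is to prove both parts of Lemma~\ref{iter} simultaneously by induction on $n$, since the truth of (i) for the path $\tau\big((a_i)_{i=1}^{n}\big)$ and of (ii) at each stage feed into one another: (ii) tells us how many $DD$-components to expect, hence which elevation operations are well-defined, and (i) is precisely the criterion that distinguishes the ``$a_n=0$'' case (append $UD$, a fresh $DD$-component of the form $(UD)^1$) from the ``$a_n>0$'' cases in the algorithmic description. The base case is $n=1$: the only sequence is $0$, $\tau(0)=UD$ which ends in $UD$ (agreeing with $a_1=0$), and it has exactly one $DD$-component, which matches the single main value ($1$) available for $a_2$ in the $m=0$ row of the Table.

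First I would record the shape of $\tau(a)$ as a concatenation of $DD$-components and show, from the recursive definition, that each such component has the advertised form $(UD)^iUPD$ with $P$ a nonempty Dyck path (last one possibly $(UD)^i$, $i\ge1$); this structural fact is really the engine. For the inductive step, fix $(a_i)_{i=1}^{n-1}\in\a_{n-1}(021)$ with $P=\tau\big((a_i)_{i=1}^{n-1}\big)$, and let $d$ be its number of $DD$-components, which by the induction hypothesis (ii) equals the number of main values for $a_n$. I would then walk through the construction for each admissible $a_n$ and check that (i) holds for the new path and that the new number of $DD$-components equals the number of main values for $a_{n+1}$ predicted by the Table applied to $a_1\dots a_n$:

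\emph{Case $a_n=0$.} Appending $UD$ creates a new trailing $DD$-component $(UD)$, so the path ends in $UD$, consistent with (i); and $m(a_1\dots a_n)=m(a_1\dots a_{n-1})$ while $\#\text{asc}$ is unchanged (the new step $a_{n-1}\to 0$ is not an ascent when $a_{n-1}\ge0$... one must note $a_{n-1}$ could be $0$ too), and the new last entry is $0$, so the relevant Table row is ``$m>0$ and $a_{n-1}=0$'' or ``$m=0$,'' and in each the main-value count for $a_{n+1}$ is one more than before, matching the $+1$ in $DD$-components. \emph{Case $a_n=a_{n-1}>0$ (a repetition).} Inserting $UD$ just before the last $DD$-component turns that component from $(UD)^iUPD$ into $(UD)^{i+1}UPD$: no new $DD$-component at \gl\ is created (the inserted $DD$ sits inside, not at ground level, because it is followed by more of the component), the count $d$ is preserved, and the path does not end in $UD$ (it still ends with the old component's $\dots PD$, $P$ nonempty) so (i) holds since $a_n>0$; meanwhile the Table row ``$m=a_{n-1}>0$'' gives main values $[m+1,1+\#\text{asc}]$, the same length as before, matching. \emph{Case $a_n$ the $j$th main value.} Elevating the last $j$ components $P_j\cdots P_1$ to $UP_j\cdots P_1D$ merges them into a single new $DD$-component (one checks it has the form $(UD)^0U(\text{nonempty})D$), so the number of components drops from $d$ to $d-j+1$; the path ends with $\dots P_1D$ and $P_1$ nonempty so it does not end in $UD$, giving (i) since $a_n>0$; and one verifies from the Table that making $a_n$ the $j$th main value leaves $\max$ at some value and raises $\#\text{asc}$ by exactly $1$ (the step $a_{n-1}\to a_n$ is an ascent, as $a_n$ is a main value hence $>a_{n-1}$ in the relevant rows) in such a way that the new main-value interval for $a_{n+1}$ has length $d-j+1$. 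Throughout, I would lean on Lemma~\ref{021crit} and the second displayed Lemma (tight indices are ascent tops) to justify that ``main value'' always means ``strictly larger than $a_{n-1}$ and at most $1+\#\text{asc}$.''

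The main obstacle I anticipate is the bookkeeping in the ``elevate $j$ components'' case: one must confirm that the $j$ components being wrapped are genuinely the last $j$ ground-level $DD$-components, that wrapping them in $U\cdots D$ produces exactly one new ground-level $DD$-component and does not accidentally split or destroy earlier ones, and — the delicate part — that the arithmetic of $\max$ and $\#\text{asc}$ under appending the $j$th main value reproduces the Table's prediction for $a_{n+1}$, i.e. that $1+\#\text{asc}(a_1\dots a_n) - \big(\text{new }\max\text{ or }a_n\big)+1 = d-j+1$. This is exactly where the interplay between the combinatorics of ascent sequences and the $DD$-component structure is tightest, and it is presumably why the authors relegate it to the reader; I would handle it by carefully tabulating, for each Table row and each sub-case of whether $a_n=m$ or $a_n<m$ after the update, the resulting row for $a_{n+1}$, and matching lengths.
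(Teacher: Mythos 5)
The paper leaves this proof to the reader, so there is nothing to compare against; judged on its own terms, your induction has the right architecture (parts (i) and (ii) must indeed be carried along together, and (i) for the prefix is what controls the component structure at the right end of the path), but the case $a_n=0$ contains a concrete error. You assert that appending $UD$ ``creates a new trailing $DD$-component'' and that ``in each [Table row] the main-value count for $a_{n+1}$ is one more than before, matching the $+1$ in $DD$-components.'' Both claims fail when $a_{n-1}=0$. In that sub-case, by part (i) of the induction hypothesis the path $P=\tau\big((a_i)_{i=1}^{n-1}\big)$ already ends in $UD$, so its last $DD$-component has the form $(UD)^i$; appending $UD$ merely extends it to $(UD)^{i+1}$, because components are split only after a $DD$ that returns to \gl\ and $UDUD$ contains no $DD$. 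So the component count is unchanged. Correspondingly, the Table row for $a_1\dots a_n$ is the same row (``$m=0$'' or ``$m>0$ and $a_{n-1}=0$'') with the same endpoints, so the number of main values is also unchanged, not one more. The $+1$ on both sides occurs only when $a_{n-1}>0$, i.e.\ when the old row was ``$m=a_{n-1}>0$'' (interval $[m+1,1+\#\mathrm{asc}]$) and the new row is ``$m>0$ and $a_{n-1}=0$'' (interval $[m,1+\#\mathrm{asc}]$), which is exactly when $P$ ends in $DD$ and the appended $UD$ genuinely starts a fresh component. The conclusion of (ii) survives, but only after splitting this case on whether $a_{n-1}=0$ --- and that split is precisely where part (i) earns its place in the joint induction, so glossing over it hides the main point.

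The other two cases are handled correctly in outline: the repetition case preserves both counts, and the elevation case replaces the last $j$ components by one, so the count drops to $d-j+1$. You do not actually carry out the arithmetic you promise for the elevation case, but the plan is sound and the numbers do work: in every row the $j$th main value $a_n$ becomes the new maximum $m'$, the step into it is an ascent, and the new interval $[m'+1,\,2+\#\mathrm{asc}]$ has length $d-j+1$ (e.g.\ in the row ``$m>0$, $a_{n-1}=0$'' one has $d=\#\mathrm{asc}+2-m$, $m'=m+j-1$, and the new count is $\#\mathrm{asc}+2-(m+j)+1=d-j+1$). Write that tabulation out and repair the $a_n=0$ case as above, and the proof is complete.
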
 
We leave the reader to verify that the two descriptions give the same bijection. A different bijection from $\a(021)$ to Dyck paths appears in \cite{callan021ascent}.

\section{A \gf for $\mathbf{\a(021)}$ }\label{gf021}
With $\a(021)$ the set of all 021-avoiding ascent sequences,
let $F(x,y,z,w)$ denote the \gf for $\a(021)$ with 
$x,y,z,w$ marking, respectively, length, \# 0's, \# isolated 0's, 
\# runs of 2 or more 0's. For example, 00102200023030 contributes $x^{14} y^8 z^3 w^2$ to $F$.
\begin{theorem}
$
F(x,y,z,w)=
$
\begin{equation}\label{4vargf}
\frac{1}{2x}\left(1 - \sqrt{ \frac{1 - x (4 + y) + 4 x^2 \big(1 + y (1 - z)\big) - 4 x^3 y \big(1 + w y - (1 + y) z\big) +  4 x^4 y^2 (w - z)}{1 -  x y}}\right)
\end{equation}

\end{theorem}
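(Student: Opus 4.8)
The plan is to turn the recursive decomposition of $021$-avoiding ascent sequences behind the bijection $\tau$ of Section~\ref{bij} into a quadratic equation for $F$, and then to solve and simplify it. First I would set aside the all-$0$ sequences: the words $0^{n}$ with $n\ge 0$ (including the empty word) are exactly the $021$-avoiding ascent sequences with no positive entry, and since $0^{1}$ is an isolated $0$ while each $0^{n}$ with $n\ge 2$ is a single run of $\ge 2$ zeros, their total contribution to $F$ is
\[
Z \;=\; 1 + xyz + \frac{x^{2}y^{2}w}{1-xy}\,.
\]

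Every other $a\in\a(021)$ has a positive entry, hence a key index $k$ with $M:=a_{k}\ge 1$; writing $t\ge 0$ for the number of copies of $M$ immediately following $a_{k}$, the decomposition recalled in Section~\ref{bij} (from \cite{021chen}) expresses $a$ uniquely as
\[
a \;=\; a'\, M^{t+1}\, a'' ,
\]
where $a'=a_{1}\cdots a_{k-1}$ is a nonempty $021$-avoiding ascent sequence (nonempty because $a_{1}=0$ is never tight), $M=1+\#\{\text{ascents of }a'\}$, and $a''$ becomes an arbitrary, possibly empty, $021$-avoiding ascent sequence $b$ once each of its nonzero entries is decremented by $M-1$. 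Conversely every such triple $(a',t,b)$ reconstitutes a valid $a$, so $a\mapsto(a',t,b)$ is a bijection from the $a\in\a(021)$ having a positive entry onto $\{\text{nonempty }021\text{-avoiding ascent sequences}\}\times\mathbb{Z}_{\ge 0}\times\a(021)$. The four statistics split additively here: length is $|a'|+(t+1)+|b|$, and because the block $M^{t+1}$ consists only of positive entries it contains no $0$ and it keeps the maximal runs of $0$'s of $a'$ disjoint from those of $a''$, so (the decrement not disturbing zeros) the multiset of $0$-run lengths of $a$ is the disjoint union of those of $a'$ and of $b$; hence the number of $0$'s, the number of isolated $0$'s, and the number of runs of $\ge 2$ zeros are each the sum of the corresponding quantities for $a'$ and $b$, and $M^{t+1}$ is inert for $y,z,w$. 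Translating the bijection into generating functions therefore gives
\[
F \;=\; Z + \frac{x}{1-x}\,(F-1)\,F ,
\]
with $F-1$ enumerating the choices of $a'$, the factor $\frac{x}{1-x}=\sum_{t\ge 0}x^{t+1}$ the block $M^{t+1}$, and $F$ the choices of $b$.

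Clearing the denominator turns this into the quadratic $xF^{2}-F+(1-x)Z=0$, whose branch with $F(0)=1$ is
\[
F \;=\; \frac{1}{2x}\left(1-\sqrt{\,1-4x(1-x)Z\,}\right);
\]
inserting $Z$ and reducing $1-4x(1-x)Z$ over the common denominator $1-xy$ produces exactly the radicand of \eqref{4vargf}, which is the one computation of any length and is entirely routine. The step that demands real care is the bijectivity claim above — that grafting $M^{t+1}$ and a shifted copy of an \emph{arbitrary} $b$ onto \emph{any} nonempty $a'$ yields a legitimate $021$-avoiding ascent sequence whose key index is exactly $|a'|+1$; this is the ``key to recursion'' fact noted in Section~\ref{bij}, and I would verify it by checking that every entry of $a'$ is $\le M$, that the first $M$ occupies a tight index (by the ascent-count bookkeeping), and that the ascent count accumulated through the $M$-block keeps every index of the $b$-part strictly below tightness. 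Once that is established, the rest is bookkeeping and the quadratic.
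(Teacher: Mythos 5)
Your proof is correct, and it arrives at exactly the same functional equation
\[
F \;=\; 1+xyz+\frac{x^2y^2w}{1-xy}+\frac{x}{1-x}\,(F-1)\,F
\]
as the paper, but by a genuinely different route. The paper never decomposes the sequences themselves: it pushes everything through the bijection $\tau$, observes that runs of $0$'s correspond to ``good runs of peaks'' in the image Dyck path, and then writes the equation from the standard path decomposition into sawtooth paths $(UD)^i$ versus paths of the form $(UD)^iUPDQ$ with $P$ nonempty. You instead split off the all-$0$ sequences by hand (your $Z$ is exactly the paper's class (i) contribution) and apply the Chen et al.\ key-index decomposition $a=a'M^{t+1}b'$ directly to the remaining sequences; this is in fact the decomposition from which $\tau$ is built, so the two arguments are mirror images of one another, but yours bypasses the Dyck paths entirely. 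The trade-off is in what must be verified: the paper needs (and asserts as ``clear'') that $\tau$ carries the runs of $0$'s to the good runs of peaks with lengths preserved, while you need the bijectivity of $(a',t,b)\mapsto a'M^{t+1}b'$ together with the additivity of the zero-run statistics across the three blocks. Your additivity argument is sound (the block $M^{t+1}$ contains no zeros and the shift by $M-1$ fixes zeros, so the multiset of $0$-run lengths splits), and your sketch of the bijectivity check is essentially complete; the one point worth adding is that the later copies of $M$ in the block, not just the entries of the $b'$-part, must fail to be tight, which follows because the last entry of $a'$ is always strictly less than $M$ (tight indices are ascent tops), so the ascent count reaches $M$ already at the first copy of $M$ and every subsequent $M$ falls short of tightness by exactly one. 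The closing algebra checks out: $xF^2-F+(1-x)Z=0$ with the branch $F(0)=1$ reproduces the stated radicand over the denominator $1-xy$.
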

\begin{proof}

A \emph{peak} in a Dyck path is an occurrence of $UD$. 
A \emph{run} of peaks in a Dyck path is a maximal subpath of the form $(UD)^i,\ i\ge 1$ and a 
\emph{good run} of peaks is one that is not immediately followed by a $U$, i.e., is either 
followed by a $D$ or ends the path. A good peak is one that is contained in a good run of peaks.
For example, in the $a_n=1$ path in Figure 5, there are 3 runs of peaks of which the first and third are good.

From the algorithmic description of the bijection $\tau$ from $\a_n(021)$ to $\d_n$ of the previous Section, it is
clear that 0's go to good peaks, indeed the runs of 0's, say of lengths $r_1,r_2,\dots,r_t$ left to right, go to 
the good runs of peaks, also $t$ in number and of lengths $r_1,r_2,\dots,r_t$ left to right.

So our desired \gf $F(x,y,z,w)$ is also a \gf for Dyck paths 
with $x,y,z,w$ marking, respectively, semilength, \# good peaks, \# good peak runs 
of length 1, \#  good peak runs of length $\ge 2$. It is easy to find this \gf: split Dyck paths 
into two classes, (i) sawtooth paths, $(UD)^i,\ i\ge 0$, and (ii) non-sawtooth paths, that is, 
paths of the form $(UD)^iUPDQ$ with $i\ge 0,\ P$ a nonempty Dyck path, and $Q$ a Dyck path. The 
contributions to $F$ are as follows. Class (i) contributes the constant 1 (for the empty path) 
+ $xyz$ (for the path $UD$) + $\sum_{i\ge 2}x^iy^iw$ (for $(UD)^i$ with $i\ge 2$). Class (ii) 
contributes $\sum_{i\ge 0}x^{i+1}(F-1)F$. Consequently,
\[
F = 1+xyz+\frac{x^2y^2w}{1-xy}+\frac{x}{1-x}(F-1)F\, ,
\]
with solution (\ref{4vargf}).
\end{proof}
Several distributions can be derived from $F$. For instance, the number of 0's in 021-avoiding ascent sequences has \gf 
\[
F(x,y,1,1)=\frac{1 - \sqrt{(1 - x (4 + y) + 4 x^2)/(1 - x y) }}{2 x}\,,
\]
sequence \htmladdnormallink{A175136}{http://oeis.org/A175136}.

\section{021-Avoiding ascent sequences -- no 00's} 
Let $\u_n$ denote the set of 021-avoiding ascent sequences of length $n$ that contain no two consecutive zeros and $u_n=\v \,\u_n \v$. For example, $\u_0=\{\epsilon\}$ where $\epsilon$ is the empty sequence, $\u_1=\{0\}$, $\u_2=\{01\}$, and $\u_3=\{010,\,011, 012\}$.
The \gf $\sum_{n\ge 0}u_n x^n$ is given by 
\[
F(x,1,1,0)=\frac{1 - \sqrt{1 - 4x + 4x^3}}{2 x}\,.
\]
We will show bijectively (Theorem \ref{no00} below) that $u_n$ satisfies a Catalan-like recurrence, see
\htmladdnormallink{A025265}{http://oeis.org/A025265} and 
\htmladdnormallink{A025262}{http://oeis.org/A025262}.

We say an entry $a_i$ in $a=(a_i)_{i=1}^n \in \u_n$ is a \emph{max} if $i$ is a tight index. 
We have $a_1=0$ and $a_2=1$ for all $a\in \u_n$ with $n\ge 2$. In particular, $a_1$ is never a max and $a_2$ is always a max---the trivial one. A nontrivial max is one with index $\ge 3$. 

\begin{theorem}\label{no00}
 \begin{equation}\label{catrec}
 u_n=u_0 u_{n-1} +u_1 u_{n-2} + \dots +u_{n-1}u_{0}
 \end{equation}
for $n\ge 3$ with initial conditions $u_0=u_1=u_2=1$.
\end{theorem}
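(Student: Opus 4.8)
The plan is to pass to Dyck paths and run a Catalan‑style first‑return bijection. By the proof of the generating‑function theorem in Section \ref{gf021}, under $\tau$ a $021$‑avoiding ascent sequence of length $n$ with no two consecutive $0$s corresponds to a Dyck path of semilength $n$ with no \emph{good} run of peaks of length $\ge 2$; write $\V_n$ for the set of such paths, so that $\v\V_n\v=u_n$. I will construct, for each $n\ge 3$, a bijection $\Phi\colon\V_n\to\bigsqcup_{i+j=n-1}\V_i\times\V_j$. Since $\v\V_i\v=u_i$ this gives $u_n=\sum_{i+j=n-1}u_iu_j$ for $n\ge 3$, while $u_0=u_1=u_2=1$ is read off from $\V_0=\{\emptyset\}$, $\V_1=\{UD\}$, $\V_2=\{UUDD\}$ (equivalently from $\u_0=\{\epsilon\},\u_1=\{0\},\u_2=\{01\}$).

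The one enabling fact is that a Dyck path never has $(UD)^m D$ as a prefix, so the first run of peaks of a Dyck path is always immediately followed by a $U$ — hence is a \emph{bad} run — unless the path is a sawtooth $(UD)^m$; and for $n\ge 2$ no path in $\V_n$ is a sawtooth (a sawtooth of semilength $\ge 2$ is one good run of peaks of length $\ge 2$). Given $R\in\V_n$ with $n\ge 3$: if $R$ starts with $UD$, write $R=UD\,P$ with $P$ of semilength $n-1$ and set $\Phi(R)=(\emptyset,P)$; otherwise $R$ starts $UU$, its first‑return factorization $R=U\,P\,D\,Q$ has $P\ne\emptyset$, and we set $\Phi(R)=(P,Q)$. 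The candidate inverse sends $(\emptyset,Q)\mapsto UD\,Q$ and $(P,Q)\mapsto U\,P\,D\,Q$ for $P\ne\emptyset$.

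To see these maps are well defined I must track runs of peaks across the boundaries created by the splittings. In the factorization $R=U\,P\,D\,Q$ the three boundary step‑pairs are of the form $UU$, $DD$, $DU$, none of which merges two peaks into one run, so every run of peaks lying inside $P$ or inside $Q$ retains its length and its good/bad status; hence $P\in\V_i$, $Q\in\V_j$, and conversely $U\,P\,D\,Q\in\V_n$. In the factorization $R=UD\,P$ the leading $UD$ can merge with a peak run starting $P$; but the relevant path ($P$ here, and $Q$ in the inverse) is not a sawtooth — a sawtooth of semilength $\ge 2$ is not in $\V$, and $UD$ prepended to a sawtooth is a sawtooth — so by the enabling fact that run is bad, hence the merged run is bad, and no new good run of length $\ge 2$ appears; thus $P\in\V_{n-1}$, and conversely $UD\,Q\in\V_n$ when $Q\in\V_{n-1}$. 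That $\Phi$ and the displayed map are mutually inverse is then immediate from the uniqueness of the first‑return factorization.

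The argument needs $n\ge 3$ at exactly one point: the inverse image of $(\emptyset,Q)$ is $UD\,Q$, and this lies in $\V_n$ only because $Q$, of semilength $n-1\ge 2$, is not a sawtooth; for $n=2$ the pair $(\emptyset,UD)\in\V_0\times\V_1$ would be sent to $UDUD\notin\V_2$, matching $u_2=1\ne 2=u_0u_1+u_1u_0$. So the main obstacle is just the boundary bookkeeping for runs of peaks in the preceding paragraph; granted that, the recurrence drops out. (The same proof can be carried out directly on ascent sequences using the key‑index decomposition of Section \ref{bij}: first class — $a_{k+1}$ equals the key value $M$, reduced by deleting $a_k$, which is a bijection onto $\u_{n-1}$ precisely because for $n\ge 3$ the deleted entry is positive; second class — $a_{k+1}=0$ or absent, split into $(a_1,\dots,a_{k-1})$ and the tail $a_{k+1},\dots,a_n$ with nonzero entries shifted down by $M-1$ — using the lemma of Section \ref{bij} that tight indices are ascent tops to control the ascent counts; but the Dyck‑path version above is the cleaner of the two.)
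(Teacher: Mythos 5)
Your proof is correct, but it takes a genuinely different route from the paper's. The paper argues directly on the ascent sequences: it conditions on the value of $a_3$ (the cases $a_3=1$ and $a_3=2$ each contribute $u_{n-1}$ by deletion maps, and $a_3=0$ with no nontrivial max contributes $u_{n-2}$) and, in the remaining case, on the position $k$ of the first nontrivial max (contributing $u_{k-3}u_{n-k+2}$), arriving at $u_n=2u_{n-1}+u_{n-2}+\sum_{k=4}^{n}u_{k-3}\,u_{n-k+2}$ and then noting that this rearranges into the Catalan convolution. You instead transport the problem through $\tau$ to Dyck paths with no good peak run of length $\ge 2$ and run a first-return decomposition, which yields $u_n=\sum_{i+j=n-1}u_iu_j$ directly; your boundary bookkeeping is sound, the only delicate point being the $UD\,P$ case, where you correctly invoke that $P$ cannot be a sawtooth for $n\ge 3$ so the possibly merged initial peak run stays bad, and your diagnosis of why $n=2$ fails is exactly right. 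What your version buys is the convolution in its natural form with the role of the hypothesis $n\ge 3$ made transparent; what the paper's version buys is a proof living entirely on the sequences, at the cost of an algebraic rearrangement at the end. Two cosmetic blemishes: your ``enabling fact'' is worded as if it applied to the leftmost peak run of an arbitrary Dyck path (false for, e.g., $UUDDUD$, whose first peak run is followed by a $D$), though your justification and your actual use of it make clear you mean the peak run forming a prefix of the path; and your $\V_n$ collides with the paper's notation $\V_n$ for sequences with no isolated zeros. Neither affects correctness.
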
 

The following two Propositions (with an intervening lemma) yield that, for $n\ge 3$, 
$u_n=2u_{n-1}+u_{n-2} +\sum_{k=4}^{n}u_{k-3}\,u_{n-k+2}$, which is equivalent to (\ref{catrec}), and Theorem \ref{no00} follows.
\begin{prop} For $n\ge 3,\\
\begin{array}{ll}
(i) & \v\,\{a\in \u_n: a_3=1\}\v =u_{n-1},\\
(ii)& \v\,\{a\in \u_n: a_3=2\}\v =u_{n-1},\\
(iii)& \v\,\{a\in \u_n: a_3=0\textnormal{ and $a$ has no nontrivial 
max}\}\v =u_{n-2}.\\
\end{array}$
\end{prop}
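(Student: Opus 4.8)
The plan is to prove (i)--(iii) by three explicit bijections, in each case obtained by trimming the front of the sequence. Note first that for $a\in\u_n$ with $n\ge 3$ we have $a_1=0$ and $a_2=1$, and then $a_3\le 1+\#\{(a_1,a_2)\text{ is an ascent}\}=2$, so the three cases $a_3=0,1,2$ partition $\u_n$; thus $u_n=\#\{a_3=1\}+\#\{a_3=2\}+\#\{a_3=0\}$, and the Proposition together with the deferred count of $\{a_3=0,\ a\text{ has a nontrivial max}\}$ will assemble the stated recurrence. The one recurring device I will use is that whether an entry $a_i$ is admissible, and in particular whether $i$ is a tight index, depends only on the number of ascents in the prefix $a_1\cdots a_{i-1}$.

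For (i) I would send $0\,1\,1\,a_4\cdots a_n$ to $0\,1\,a_4\cdots a_n\in\u_{n-1}$, i.e.\ delete $a_3$. Since $(a_2,a_3)=(1,1)$ is not an ascent, deleting $a_3$ leaves unchanged the number of ascents in every prefix of length $\ge 3$, so the ascent-sequence inequalities transfer entry for entry; 021-avoidance (by Lemma~\ref{021crit}, one just removes one copy of the value $1$ from the weakly increasing string of nonzero entries) and the no-$00$ condition are immediate. The inverse reinserts a $1$ after the second entry, so this is a bijection, giving $\#\{a_3=1\}=u_{n-1}$.

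For (ii): here $a=0\,1\,2\,a_4\cdots a_n$, so by Lemma~\ref{021crit} every nonzero entry among $a_4,\dots,a_n$ is $\ge 2$. I would map $a$ to the sequence obtained by deleting $a_3$ and subtracting $1$ from every remaining positive entry. Subtracting $1$ from entries that are all $\ge 2$ preserves their mutual order relations and the pattern of zeros in positions $\ge 4$, and this is exactly balanced by the loss of the ascent $(a_2,a_3)=(1,2)$, which lowers every prefix-ascent count by $1$; hence the ascent-sequence inequalities are preserved. 021-avoidance and the no-$00$ condition are again routine, and the inverse (add $1$ to the positive entries, then reinsert a $2$ after the second entry) gives a bijection onto $\u_{n-1}$.

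For (iii), the crux, I would delete the whole prefix $0\,1$, sending $0\,1\,a_3\cdots a_n$ (with $a_3=0$) to $c=(a_3,a_4,\dots,a_n)$. Because $(a_1,a_2)$ is an ascent whereas $(a_2,a_3)=(1,0)$ is not, the number of ascents in $a_1\cdots a_m$ equals $1$ plus the number of ascents in $a_3\cdots a_m$ for all $m\ge 3$. It follows that for each $i\ge 4$, index $i$ is tight in $a$ exactly when $a_i$ exceeds by precisely $1$ the largest value the ascent-sequence bound permits for $c$ at the corresponding position; equivalently, $c$ is a bona fide ascent sequence if and only if $a$ has no tight index $\ge 4$, and since $a_3=0\ne 2$ rules out tightness at index $3$, this says precisely that $a$ has no nontrivial max. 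Hence $c\in\u_{n-2}$ (its 021-avoidance and no-$00$ properties being inherited from $a$), and conversely prepending $0\,1$ to any $c\in\u_{n-2}$ produces an element of $\u_n$ with third entry $c_1=0$ which, by the same ascent count, has no nontrivial max. This is the only place the ``no nontrivial max'' hypothesis is used essentially, and I expect the equivalence ``$c$ is an ascent sequence $\iff$ $a$ has no nontrivial max'' to be the step requiring the most care; everything else is the routine transfer of the three defining conditions along the trimming maps.
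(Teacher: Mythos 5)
Your three bijections — delete $a_3$ for (i), delete $a_3$ and decrement the later nonzero entries for (ii), and delete the prefix $a_1a_2=01$ for (iii) — are exactly the maps the paper uses, and your more detailed bookkeeping of prefix ascent counts (in particular the observation that the ascent-sequence condition for the trimmed sequence in (iii) fails at position $i-2$ precisely when $i$ is a tight index of $a$) correctly fills in the justification the paper leaves terse. The proposal is correct and takes essentially the same approach as the paper.
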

\begin{proof} (i)\ ``Delete $a_3$'' is a bijection from $\{a\in \u_n: a_3=1\}$ to $\u_{n-1}$.\\
(ii)\, Similar to (i), delete $a_3$ and subtract 1 from each later nonzero entry.\\ 
(iii) ``Delete $a_1,\,a_2$'' is a bijection from the $a$'s counted on the left side to $\u_{n-2}$. 
Deleting $a_1=0$ and $a_2=1$ does not introduce a violation of the defining condition for ascent sequences because $a_3,\dots,a_{n}$ are not max entries in $a$.
\end{proof}
\begin{lemma} \label{lastmax} For $n\ge 4$,\\
$\v\,\{a\in \u_n: a_3=0\textnormal{ and $a_n$ is the first (and only) nontrivial max}\}\v =u_{n-3}$\,.
\end{lemma}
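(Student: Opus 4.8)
The plan is to prove this via an explicit bijection $\Phi$ from the set in question onto $\u_{n-3}$, namely ``delete $a_1$, $a_2$, and $a_n$'':
\[
\Phi(a_1a_2\cdots a_n)=a_3a_4\cdots a_{n-1}.
\]
Its inverse should be $\Psi(b_1b_2\cdots b_{n-3})=0\,1\,b_1b_2\cdots b_{n-3}\,M$, where $M:=1+\operatorname{asc}(0\,1\,b_1b_2\cdots b_{n-3})$ is the unique value making the last entry of $\Psi(b)$ a nontrivial max (here $\operatorname{asc}(w)$ is the number of ascents of a word $w$). When $n=4$ this is just the trivial correspondence $\{0102\}\leftrightarrow\u_1=\{0\}$.

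The fact that drives everything is a relation between ascent counts: whenever a word begins $0\,1\,0$, one has, for every $j\ge 3$,
\[
\operatorname{asc}(a_1a_2\cdots a_j)=1+\operatorname{asc}(a_3a_4\cdots a_j),
\]
since the prefix $a_1a_2a_3=010$ contributes exactly the one ascent $(a_1,a_2)$ (the pair $(a_2,a_3)=(1,0)$ being a descent) and no ascent straddles position $3$. Every $a$ in our set begins $0\,1\,0$ ($a_1=0$ and $a_2=1$ always, and $a_3=0$ by hypothesis), and so does $\Psi(b)$ ($b_1=0$), so the identity is available in both directions. I would also use that ``no two consecutive $0$s'' restricts to any block of consecutive entries and that ``nonzero entries weakly increasing'' restricts to any subsequence of the nonzero entries.

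To see $\Phi$ is well defined I would check $b:=a_3a_4\cdots a_{n-1}\in\u_{n-3}$: it begins with $a_3=0$, it inherits both avoidance conditions as above, and it obeys the ascent-sequence bound because, for $2\le i\le n-3$, the entry $b_i=a_{i+2}$ occupies a \emph{non}-tight position of $a$ (the hypothesis that $a_n$ is the first nontrivial max says no index in $[3,n-1]$ is tight), so $a_{i+2}\le\operatorname{asc}(a_1\cdots a_{i+1})=1+\operatorname{asc}(a_3\cdots a_{i+1})=1+\operatorname{asc}(b_1\cdots b_{i-1})$, exactly the bound needed for $b$. Conversely, to see $\Psi(b)$ lies in our set I would check, position by position, that $a:=\Psi(b)$ is a $021$-avoiding ascent sequence: positions $3,\dots,n-1$ meet their bounds \emph{strictly} (hence are not tight) because $b$ meets its own bounds, while position $n$ meets its bound with equality by the choice of $M$, so $a_n$ is the first (and only) nontrivial max; ``no two consecutive $0$s'' holds because the only neighbour pairs of $a$ not already lying inside $b$ are $a_1a_2=01$, $a_2a_3=10$, $a_3a_4$ (which is $0\,b_2$, or $0\,M$ when $n=4$) and $a_{n-1}a_n=b_{n-3}\,M$, none of which is $00$ since $b_2=1$ and $M\ge2$; and the nonzero entries of $a$ are weakly increasing because those of $b$ are and $M=1+\operatorname{asc}(a_1\cdots a_{n-1})\ge\max(a_1\cdots a_{n-1})$, the last inequality holding since no position before $n$ is tight. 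Finally $\Phi$ and $\Psi$ are mutually inverse: $\Phi$ discards $a_1a_2=01$ and the last entry, and that entry is recoverable precisely because index $n$ is tight, forcing $a_n=1+\operatorname{asc}(a_1\cdots a_{n-1})$.

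The only real labor is bookkeeping---tracking the index shift $b_i\leftrightarrow a_{i+2}$ and the by-$2$ shift of prefixes while invoking the right instance of the displayed identity---together with the degenerate case $n=4$ (where $b$ has length $1$). I expect no genuinely hard step once that identity is at hand.
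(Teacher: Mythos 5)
Your proof is correct and uses exactly the bijection the paper gives (``delete $a_1$, $a_2$, and $a_n$''); the paper simply asserts this map is a bijection, while you supply the verification details (the ascent-count shift for words beginning $010$, the non-tightness of indices $3,\dots,n-1$, and the recoverability of $a_n$ from tightness at index $n$). No discrepancy in approach.
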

\begin{proof}
``Delete $a_1,\,a_2,$ and $a_n$'' is a bijection from the $a$'s counted on the left side to $\u_{n-3}$. 
\end{proof}
\begin{prop}
 For $4\le k \le n,\\  
\v\,\{a\in \u_n: a_3=0\textnormal{ and $a_k$ is the first nontrivial max}\}\v =u_{k-3}\,u_{n-k+2}.$
\end{prop}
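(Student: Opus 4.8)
The plan is to construct an explicit bijection from $S_{n,k}:=\{a\in\u_n:a_3=0\textrm{ and }a_k\textrm{ is the first nontrivial max}\}$ onto $\u_{k-3}\times\u_{n-k+2}$; note that $k=n$ recovers Lemma~\ref{lastmax} (as $u_2=1$), so this is the ``interior cut'' version of that lemma. I would work throughout with Lemma~\ref{021crit} and with the following easy observations about any $a\in S_{n,k}$: one has $a_1a_2a_3=010$; writing $M:=a_k=1+\#\{j\in[1,k-2]:a_j<a_{j+1}\}$, we have $M\ge 2$ (the ascent $(a_1,a_2)$ already forces this); none of $a_3,\dots,a_{k-1}$ is tight; and $a_i<M$ for every $i<k$ (equality with $3\le i<k$ would make $i$ tight, contradicting $k$ being the smallest nontrivial tight index, while $i\le 2$ is trivial since $M\ge 2$). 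In particular $a_{k-1}<M$, so $(a_{k-1},a_k)$ is an ascent.

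First I would treat the \emph{prefix} $a_1\cdots a_k$. I claim the reindexed word $a':=a_3a_4\cdots a_{k-1}$ lies in $\u_{k-3}$ and that $a_1\cdots a_k\mapsto a'$ is a bijection onto $\u_{k-3}$. Deleting $a_1,a_2$ destroys exactly the one ascent $(0,1)$, which lies to the left of every remaining entry, so each allowance drops by exactly $1$; since no surviving entry was tight, $a'$ is still an ascent sequence, it still starts with $0$, and it inherits both the no-00 condition and the weak monotonicity of its nonzero entries. Conversely, from $a'\in\u_{k-3}$ one reads off $a_1=0$, $a_2=1$, $a_3\cdots a_{k-1}=a'$, forces $a_k=M$, and checks routinely (using $a_4=a'_2=1$ when $k\ge 5$, which keeps the nonzero entries weakly increasing) that the resulting prefix is legal with $a_k$ its first nontrivial max.

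Next, the \emph{suffix}. Define $b=(b_1,\dots,b_{n-k+2})$ by $b_1=0$, $b_2=1$, and, for $3\le j\le n-k+2$, $b_j:=a_{k+j-2}-(M-1)$ if $a_{k+j-2}\neq 0$ and $b_j:=0$ otherwise (consistently $b_2=a_k-(M-1)$). Since $021$-avoidance forces every nonzero entry among $a_{k+1},\dots,a_n$ to be $\ge M$, the modified nonzero entries of $b$ are $\ge 1$ and weakly increasing, and $b$ inherits the no-00 condition; the only substantive point is that $b$ is an ascent sequence. For this I would establish the bookkeeping identity
\[
\#\{\textrm{ascents in }b_1\cdots b_m\}=\#\{\textrm{ascents in }a_1\cdots a_{k+m-2}\}-(M-1)\qquad(2\le m\le n-k+2),
\]
by matching the ascents among $b_2,\dots,b_m$ one-for-one with those among $a_k,\dots,a_{k+m-2}$ (here using that each modified nonzero entry exceeds $0$), and then invoking that $(a_{k-1},a_k)$ is an ascent and that $M-1=\#\{\textrm{ascents in }a_1\cdots a_{k-1}\}$. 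Granting the identity, the ascent-sequence inequality for $b$ at index $j$ is immediate from the one for $a$ at index $k+j-2$ (vacuous when $a_{k+j-2}=0$), so $b\in\u_{n-k+2}$.

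Finally I would assemble: $a\mapsto(a',b)$ maps $S_{n,k}$ into $\u_{k-3}\times\u_{n-k+2}$; it is injective since $a$ is recovered from $(a',b)$ ($a'$ gives $a_1\cdots a_k$ with $a_k=M$ forced, then $a_{k+j-2}=b_j+(M-1)$ or $0$ according as $b_j\neq 0$ or $b_j=0$), and surjective since, from an arbitrary pair, the same formulas build an $a\in\u_n$ all of whose defining conditions are exactly the reversible computations above — all of which depend only on the combinatorial identities, not on $a$ being given in advance. Hence $\v\,S_{n,k}\v=u_{k-3}\,u_{n-k+2}$, the last input needed for $u_n=2u_{n-1}+u_{n-2}+\sum_{k=4}^n u_{k-3}u_{n-k+2}$ and so for Theorem~\ref{no00}. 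I expect the suffix step to be the main obstacle — nailing the ascent-count identity and confirming that the normalization ``subtract $M-1$ from the nonzero entries, prepend $01$'' is genuinely reversible onto all of $\u_{n-k+2}$; the prefix step and the $021$-avoidance and no-00 checks are comparatively mechanical.
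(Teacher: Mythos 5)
Your proposal is correct and follows essentially the same route as the paper: the prefix $a_1\cdots a_k$ is counted by $u_{k-3}$ via Lemma~\ref{lastmax}, and the suffix is normalized by subtracting $M-1$ from the nonzero entries after prepending $0,1$ --- your word $b$ coincides exactly with the paper's ``delete $a_1,\dots,a_{k-2}$, change $a_{k-1}$ to $0$, subtract $a_k-1$ from remaining nonzero entries.'' You simply supply the ascent-count bookkeeping and reversibility checks that the paper leaves to the reader.
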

\begin{proof}
Suppose $a\in \u_n$ is counted by the left side. According to Lemma \ref{lastmax}, there are $u_{k-3}$ possibilities for the prefix $a_1\dots a_k$. For each such prefix, ``delete $a_1,\dots,a_{k-2}$, change $a_{k-1}$ to 0, and subtract $a_k-1$ from all remaining nonzero entries'' is a bijection to $\u_{n-(k-2)}$.
For example, with $k=6$ and prefix $010124$, this map sends 01012445057 to 0112024.
\end{proof}

\section{021-Avoiding ascent sequences -- no isolated zeros}
Let $\V_n$ denotes the set of 021-avoiding ascent sequences of length $n$ that contain no isolated zeros and $v_n=\v \,\V_n \v$. Thus, $\V_0=\{\epsilon\}$, $\V_1=\{\}$, $\V_2=\{00\}$, and $\V_3=\{000,\,001\}$. 

It is remarkable that $v_n$ satisfies the same recurrence as $u_n$ (``no consecutive 0's'') 
differing only in the initial conditions.
\begin{theorem}\label{noIsolated0}
For $n\ge 1,$
\begin{equation}\label{explicit00}
 v_n=\sum_{k=1}^{\lfloor (n+1)/3 \rfloor}2^{n-3k+1}\binom{n-k-1}{2k-2}\,C_{k-1}
\end{equation}
where $C_n:=\binom{2n}{n}-\binom{2n}{n-1}$ is the Catalan number,   
and $v_n$ satisfies the defining recurrence   
\[ 
 v_n=v_0 v_{n-1} +v_1 v_{n-2} + \dots +v_{n-1}v_{0}
\]
for $n\ge 3$ with initial conditions $v_0=1,\,v_1=0,\,v_2=1$.
\end{theorem}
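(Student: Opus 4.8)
The plan is to imitate the generating-function derivation of Theorem 2 (the four-variable result), specialized to track only what is needed for $\V_n$. First I would locate, inside the bijection $\tau:\a_n(021)\to\d_n$ (equivalently, the Dyck-path model of Section~\ref{gf021}), the image of the subclass $\V$ of sequences with no isolated $0$'s. Since $0$'s correspond to \emph{good peaks}, with runs of $0$'s of length $r_1,\dots,r_t$ going to good peak runs of the same lengths, the condition ``no isolated $0$'' becomes ``no good peak run of length exactly $1$.'' Setting $z=0$ in $F(x,y,z,w)$ and then $y=w=1$ is exactly the substitution made in the statement: $\sum_{n\ge 0}v_nx^n = F(x,1,0,1)$. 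So the first concrete step is to record
\[
V(x):=\sum_{n\ge0}v_nx^n = F(x,1,0,1) = \frac{1-\sqrt{\,1-4x+4x^3-4x^4+4x^4\,}}{2x}
\]
— i.e., to plug $(y,z,w)=(1,0,1)$ into \eqref{4vargf} and simplify the radicand; the $x^4$ terms $4x^4(w-z)$ and the $-4x^3(1+wy-(1+y)z)=-4x^3(1+1-0)=-8x^3$ pieces must be combined carefully, but this is routine algebra and I would just state the resulting closed form.

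Next I would derive the functional equation for $V$. Rather than re-deriving it from the Dyck-path decomposition, the cleanest route is to substitute $(y,z,w)=(1,0,1)$ directly into the displayed equation
\[
F = 1+xyz+\frac{x^2y^2w}{1-xy}+\frac{x}{1-x}(F-1)F
\]
from the proof of Theorem~2, which gives
\[
V = 1 + \frac{x^2}{1-x} + \frac{x}{1-x}(V-1)V,
\]
equivalently $(1-x)V = (1-x) + x^2 + x(V-1)V$, i.e. a quadratic $xV^2 - (1-x)\cdot\text{(something)}\,V + \cdots = 0$. From this quadratic one reads off the convolution recurrence: clearing denominators and matching the shape $V = (\text{initial terms}) + xV^2$ up to low-order corrections will show that $v_n = \sum_{k=0}^{n} v_k v_{n-k}$ for $n\ge 3$, while the small cases $v_0=1,\ v_1=0,\ v_2=1$ are forced by the initial terms of the functional equation (and match the direct enumeration $\V_0=\{\epsilon\}$, $\V_1=\varnothing$, $\V_2=\{00\}$). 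This establishes the recurrence half of the theorem.

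For the explicit formula \eqref{explicit00}, the approach is Lagrange inversion / direct coefficient extraction on $V(x)$. Writing $V = 1 + xV^2 + (\text{correction})$ is slightly awkward because of the anomalous low-order terms, so instead I would set $G := xV$, so $G$ satisfies a clean quadratic $G = x + x\,G^2/x\cdot(\dots)$ — more precisely, from $xV^2-(\text{linear})V+(\text{const})=0$ one gets an algebraic equation for $G$ of the form $G = \varphi(x,G)$ amenable to Lagrange inversion. Extracting $[x^n]$ then produces a sum over $k$ of a product of a binomial coefficient (counting how the ``free'' $D U \cdots$ and $UD$ factors interleave, giving the $2^{n-3k+1}$ and $\binom{n-k-1}{2k-2}$) and a Catalan number $C_{k-1}$ (counting the $k-1$ nested non-sawtooth nestings, each of size $\ge 2$, which is where the factor $x^3$-per-unit comes from — matching the $n-3k$ in the exponent). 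The bookkeeping of which subpaths contribute $x$, $x^2$, or a Catalan-weighted block is the main obstacle: one must partition a no-length-$1$-good-run Dyck path into its sawtooth stretches and its genuinely nested parts, show each nested part of semilength $m$ contributes $C_{m-2}$ (or similar) with $m\ge 2$, and count the interleavings. I expect that step — getting the three indices in $2^{n-3k+1}\binom{n-k-1}{2k-2}C_{k-1}$ to come out exactly right — to require the most care; everything else is formal manipulation of the quadratic already handed to us by Theorem~2. As a sanity check I would verify \eqref{explicit00} against $v_3=1,\ v_4=2,\ v_5=4$ (so $k$ runs up to $\lfloor(n+1)/3\rfloor$, e.g. for $n=5$ only $k=1,2$ contribute: $2^{3}\binom{3}{0}C_0 + 2^{0}\binom{2}{2}C_1 = 8\cdot1\cdot1$? — this overshoots, so in fact the exponent/offsets must be read as written and I would recompute; pinning down the off-by-one conventions is precisely the delicate part) and against the series expansion of $V(x)=F(x,1,0,1)$.
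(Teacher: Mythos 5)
The recurrence half of your argument is sound and is essentially what the paper does: substituting $(y,z,w)=(1,0,1)$ into the functional equation for $F$ gives $V=1+\frac{x^2}{1-x}+\frac{x}{1-x}(V-1)V$, which simplifies to $V=1-x+x^2+xV^2$, and reading off coefficients yields $v_n=\sum_{k=0}^{n-1}v_kv_{n-1-k}$ for $n\ge 3$ with $v_0=1,v_1=0,v_2=1$. (The paper is terser here -- it just records $F(x,1,0,1)=\frac{1-\sqrt{1-4x+4x^2-4x^3}}{2x}$ and calls the equivalence routine -- so your version is, if anything, more explicit. Note, though, that your displayed radicand is garbled: it should be $1-4x+4x^2-4x^3$, not $1-4x+4x^3-4x^4+4x^4$, which collapses to the radicand of the \emph{no-consecutive-zeros} series $F(x,1,1,0)$.)

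The explicit formula \eqref{explicit00} is where the gap is. Your plan (Lagrange inversion / coefficient extraction on $V$) is a genuinely different route from the paper's, and it can be made to work -- writing the radicand as $(1-2x)^2-4x^3$ gives $V=1+\frac{x^2}{1-2x}\,C\!\left(\frac{x^3}{(1-2x)^2}\right)$, and expanding $C$ and the negative binomial series produces exactly $\sum_k 2^{n-3k+1}\binom{n-k-1}{2k-2}C_{k-1}$ -- but you never actually carry out this extraction; you only assert that the indices ``should'' come out right while flagging that you cannot pin them down. Worse, your sanity check uses incorrect values of $v_n$: from $\V_3=\{000,001\}$ and the recurrence one gets $v_3=2$, $v_4=4$, $v_5=9$ (not $1,2,4$), and the formula at $n=5$ gives $8+1=9$, which is correct rather than an overshoot. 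So the one concrete verification you attempted led you to doubt a true identity. The paper avoids the analytic extraction entirely: it bijects $\V_{n,k}$ (sequences with $k$ runs of $0$'s, no isolated $0$) with $\b_{n-k,k}$ by deleting one $0$ from each run, observes via Lemma \ref{021crit} and $\tau$ that the number of runs of $0$'s equals $1+{}$the number of $DDU$'s in the image Dyck path, and then quotes the Touchard distribution $t_{n,k}=2^{n-2k-1}\binom{n-1}{2k}C_k$, so that $v_n=\sum_k t_{n-k,k-1}$ is immediate. You would need either to supply that combinatorial argument or to complete the coefficient extraction honestly; as written, \eqref{explicit00} is unproved.
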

\begin{proof}
Refine $\V_n$ to 
$\V_{n,k}=\{a\in \V_n:\ a\textrm{ has $k$ runs of 0's}\}$ and set $v_{n,k}=\v \,
\V_{n,k} \v$. For example, $\V_{8,3}=\{00100100,\,00100200\}$. Now let $\b_{n,k}$ 
denote the set of 021-avoiding ascent sequences with $k$ runs of 0's, and 
let $b_{n,k}=\v \,\b_{n,k} \v$. Then ``append 
a 0 to each run of 0's'' is a simple bijection from 
$\b_{n-k,k}$ to $\V_{n,k},\ 1\le k \le (n+1)/3$.

Turning to $\b_{n,k}$, the bijection $\tau$ of Sec. \ref{gf021} sends descents to $DDU$s. 
Since each descent is necessarily to 0 by Lemma \ref{021crit}, 
the number of runs of 0's is precisely $1+\#\,$descents. It is well known that the number of 
Dyck paths of size $n$ with $k\ DDUs$ is given by $t_{n,k}:= 2^{n - 2 k - 1} 
\binom{n - 1}{2 k}C_k$, the Touchard distribution 
\htmladdnormallink{A091894}{http://oeis.org/A091894}. 
Since $b_{n,k}=t_{n,k-1}$ and $v_{n}=\sum_{k\ge 1}v_{n,k}=\sum_{k\ge 1}b_{n-k,k}$, identity (\ref{explicit00}) follows.

The \gf $\sum_{n\ge 0}v_n x^n$ is given by 
\[
F(x,1,0,1)=\frac{1 - \sqrt{1 - 4x + 4x^2 - 4x^3}}{2 x}\, ,
\]
and it is routine to check that the recurrence of the theorem is equivalent to this \gf. 
There does not, however, seem to be any very obvious bijective proof of the recurrence.

\end{proof}

\section{021-Avoiding descent sequences}
Let $ \d_{n,k}$ refer to descent sequences of length $n$ with $k$ descents.

\begin{theorem}\label{021des}
For $n\ge 1,\, k\ge 0,$
\begin{equation}\label{021deseq}
\v \d_{n,k}(021)\v =  \binom{n+k}{3k+1} C_k\,.
\end{equation}
\end{theorem}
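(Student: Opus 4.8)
The plan is to follow the same template used for the previous two sections: exploit Lemma~\ref{021crit} to reduce a $021$-avoiding descent sequence to combinatorial data, then transport the count to lattice paths via a variant of the bijection $\tau$. First I would observe that in a $021$-avoiding descent sequence $a=(a_i)_{i=1}^n$, every descent must land on $0$ (since after a descent the entry is nonzero only if it continues the weakly increasing run of nonzeros, which is impossible right after a strict decrease), so a descent sequence is governed by exactly the same local rule as an ascent sequence except that the ``credit'' $\#\{j: a_j>a_{j+1}\}$ is incremented at descents rather than ascents. Writing $k$ for the number of descents, I would set up a decomposition of $\d_{n,k}(021)$ parallel to the Chen et~al.\ decomposition: peel off the prefix up to the key (largest tight) index, strip the immediately-following repeated maxima, and recurse on the suffix after decrementing nonzero entries. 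This yields a recursive structure whose image under the path bijection is a Dyck path of semilength $n$ in which descents correspond to a forbidden local configuration.

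The key step is to identify the path statistic: under $\tau$ (suitably adapted from ascent to descent sequences), a descent in $a$ forces the pattern $UUD\hspace{2pt}U$ to be \emph{created}, so that $021$-avoiding descent sequences with $k$ descents biject with Dyck paths of semilength $n$ having exactly $k$ occurrences of $UUD\hspace{2pt}U$ — or, more precisely, with $UUD\hspace{2pt}U$-avoiding Dyck paths carrying $k$ ``marked'' sites. (This is the statement promised in the abstract and the last section's title.) Having reduced to a path-enumeration problem, I would count Dyck paths of semilength $n$ with exactly $k$ occurrences of the pattern $UUD\hspace{2pt}U$. The standard way is a symbolic/kernel computation: let $G(x,q)=\sum_{n,k}|\d_{n,k}(021)|\,x^n q^k$, derive a functional equation by first-return decomposition tracking whether the returning arch ends in the offending pattern, and solve. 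Alternatively — and this is probably cleaner for pinning down the closed form $\binom{n+k}{3k+1}C_k$ — I would count directly: a Dyck path with exactly $k$ copies of $UUD\hspace{2pt}U$ decomposes, after removing the $k$ ``plateaux'' $UD$ that each such pattern is built around, into a $UUD\hspace{2pt}U$-free skeleton, and $UUD\hspace{2pt}U$-free Dyck paths are themselves enumerated by a known Motzkin/Fibonacci-flavoured sequence. Matching the resulting bivariate generating function against $\sum_{n,k}\binom{n+k}{3k+1}C_k\,x^nq^k$ finishes the proof; the $C_k$ factor is the tell-tale sign that the $k$ pattern occurrences are independently shaped by $k$ little Dyck structures, while $\binom{n+k}{3k+1}$ counts how the remaining $n-3k$-ish steps interleave with the $3k+1$ structural ``slots.''

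The main obstacle I anticipate is getting the adapted bijection $\tau$ and its descent-to-$UUD\hspace{2pt}U$ correspondence exactly right, including the boundary/degenerate cases (the all-$0$ sequence, sequences with no descents, and the last $DD$-component subtlety that already required care in Section~\ref{bij}); a single off-by-one in which peak/valley a descent maps to will break the pattern identification. A secondary obstacle is the closed-form extraction: the generating function for Dyck paths by number of $UUD\hspace{2pt}U$ occurrences is algebraic but not obviously in a form from which $\binom{n+k}{3k+1}C_k$ leaps out, so I would likely need a Lagrange-inversion argument — set $y$ equal to the generating function for $UUD\hspace{2pt}U$-free Dyck paths (or a simple related series), express $G$ rationally in $y$ and $q$, and apply Lagrange inversion to read off the coefficient, checking small cases ($n\le 5$, say $|\d_{4,0}(021)|,|\d_{4,1}(021)|$) against the formula to catch errors early.
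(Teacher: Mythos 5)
There is a genuine gap: your argument hinges on adapting $\tau$ so that descents in a $021$-avoiding descent sequence correspond to occurrences of $UUD\hspace{2pt}U$ in a Dyck path of semilength $n$, and this step is both unproven and, in the form you state it, inconsistent with the counts. If sequences with $k$ descents bijected with Dyck paths of semilength $n$ having exactly $k$ occurrences of $UUD\hspace{2pt}U$, then summing over $k$ would give $\v\d_n(021)\v=C_n$; but the theorem's formula sums to $9$ at $n=4$ while $C_4=14$. The correct relationship is only that the \emph{totals} $\v\d_n(021)\v$ agree with the number of $UUD\hspace{2pt}U$-\emph{avoiding} Dyck paths, and the paper explicitly leaves a bijective explanation of even that coincidence as an open problem in its closing sentence --- so the central device of your proof is precisely the thing the paper says it does not know how to do. Your fallback (``$UUD\hspace{2pt}U$-avoiding paths carrying $k$ marked sites'') is not specified concretely enough to recover $\binom{n+k}{3k+1}C_k$, and the proposed Lagrange-inversion extraction is left entirely speculative.

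The paper's actual proof is far more elementary and stays entirely inside the sequences. By Lemma~\ref{021crit} every descent bottom is $0$, and the $i$th descent top $d_i$ satisfies $1\le d_i\le i$ with the $d_i$ weakly increasing, so the list of descent tops is a Catalan sequence, giving the factor $C_k$. Fixing that list forces $w$ to have a rigid template: the $k$ boxed descents $\boxed{d_i\,0}$ interleaved with $3k+2$ ``slots,'' each a run of a prescribed value ($k+1$ slots of $0$'s, $k+1$ slots for the values $1,\dots,k+1$, and $k$ extra slots created by repeating each descent top on both sides of its box). Distributing the remaining $n-(2k+1)$ entries among these $3k+2$ slots by stars and bars gives $\binom{n+k}{3k+1}$. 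If you want to salvage your plan, you would need to either prove the descent-to-pattern correspondence from scratch (solving the paper's open problem) or abandon the path model and carry out this direct template count.
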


\begin{proof}
Consider $w \in \d_{n,k}(021)$.  
By Lemma \ref{021crit}, the descent bottoms of $w$ are all 0, and the descent tops, say $(d_i)_{i=1}^k$, 
form a Catalan sequence, 
that is, $1 \le d_i \le i$ for all $i$ and the $d_i$'s are weakly increasing. 
The number of Catalan sequences 
of length $k$ is well known to be $C_k$, and so there are $C_k$ possibilities for the list of descent tops. 
For each such list $(d_i)_{i=1}^k$, we will show there are $\binom{n+k+1}{3k+1}$ possibilities for $w$.
This is because $w$ must have the  form illustrated below for $k=4$ and $(d_i)_{i=1}^k=\{1,2,2,3\}$, 
\[
0\,0^{*}\,1^{*}\,\boxed{10}\,0^{*}\,1^{*}\,2^{*}\, \boxed{20}\, 0^{*}\,2^{*}\,\boxed{20}\, 0^{*}\,2^{*}\,3^{*}\,\boxed{30}\,0^{*}\,3^{*}\,4^{*}\,5^{*}
\]
where the boxed pairs are the descents and the asterisk in $j^*$ indicates a run of zero or more $j$'s.

First, all 0's must immediately follow the descent bottom 0's (else a new descent is introduced). 
Second, the nonzero entries are weakly increasing (Lemma \ref{021crit}) up to a maximum of $k+1$; 
this accounts for the 
appearance of a single $1^*,2^*,\dots,(k+1)^*$ left to right. But also, if $\boxed{j0}$ is a  
box, then $j^*$ appears both before and after the box, in other words, repetitions account for an additional 
$k$ nonzero asterisks. 
Thus the total number of  asterisks is  $k+1$ (for the 0's) 
+ $k+1$ (for $1,2,\dots,k+1$) + $k$ (for the nonzero repetitions) = $3k+2$.

So, to specify $w$, we have to split $n-(2k+1)$ $x$'s (entries belonging to the asterisks) into $3k+2$ 
segments (some may be empty). By elementary combinatorics, we need to arrange in a row $n-(2k+1)$ $x$'s and 
$3k+1$ ``dividers''---$\binom{n-(2k+1)+3k+1}{3k+1}=\binom{n+k}{3k+1}$ ways.
\end{proof}
It follows routinely from Theorem \ref{021des} that the \gf $G(x,y)$ for nonempty 021-avoiding descent sequences,  
with $x,y$ marking length and number of descents respectively, is given by
\[
G(x,y) = 1+\frac{x}{(1 - x)^2}\, C\left(\frac{x^2 y}{(1 - x)^3}\right) 
= \frac{(1-x) \left(1-\sqrt{1-\frac{4 x^2 y}{(1-x)^3}}\right)}{2 x y}
\]
where $C(x)=\frac{1-\sqrt{1-4x}}{2x}$ is the \gf for the Catalan numbers. 

The \gf for $\d(021)$ by length is thus 
\[
1+G(x,1)= \frac{1+x-\sqrt{\frac{1-3x-x^2-x^3}{1-x}}}{2 x}=1+x+2x^2+4x^3+9x^4+22x^5+57x^6+\cdots\,.
 \]
This is also the \gf for Dyck paths that avoid $UUD\,U$, \htmladdnormallink{A105633}{http://oeis.org/A105633}.
A bijection to explain this equipotence would be interesting.

\vspace*{5mm}
\noindent Department of Statistics, University of Wisconsin-Madison, Madison, WI \ 53706-1532  \\
\noindent{\bf callan@stat.wisc.edu}  \\


\begin{thebibliography}{99}

\bibitem{oeis} The On-Line Encyclopedia of Integer Sequences, published electronically at \htmladdnormallink{http://oeis.org}{https://oeis.org/}, 2019.

\bibitem{dukes2019}  Mark Dukes and Peter R. W. McNamara,  Refining the bijections among ascent sequences, (2+2)-free posets, integer matrices and pattern-avoiding permutations, \emph{J. Combin. Theory Ser. A} \textbf{167} (2019), 403--430. 

\bibitem{dun2011} Paul Duncan and Einar Steingr\'{i}msson, Pattern avoidance in ascent sequences,
Electronic Journal of Combinatorics \textbf{18} (2011), \#\,P226.

\bibitem{021chen} Chen, William Y.C. et al, On 021-Avoiding Ascent Sequences,
\emph{The Electronic Journal of Combinatorics} \textbf{20(1)}  (2013) Art. P76.


\bibitem{callan021ascent} David Callan, Another bijection for 021-avoiding ascent sequences,
arXiv:1402.5898 [math.CO], 2014.



\end{thebibliography}
\end{document}
